\renewcommand\eqref[1]{(\ref{#1})}
\newcommand*{\mint}[1]{%
  % #1: overlay symbol
  \mint@l{#1}{}%
}
\newcommand*{\mint@l}[2]{%
  % #1: overlay symbol
  % #2: limits
  \@ifnextchar\limits{%
    \mint@l{#1}%
  }{%
    \@ifnextchar\nolimits{%
      \mint@l{#1}%
    }{%
      \@ifnextchar\displaylimits{%
        \mint@l{#1}%
      }{%
        \mint@s{#2}{#1}%
      }%
    }%
  }%
}
\newcommand*{\mint@s}[2]{%
  % #1: limits
  % #2: overlay symbol
  \@ifnextchar_{%
    \mint@sub{#1}{#2}%
  }{%
    \@ifnextchar^{%
      \mint@sup{#1}{#2}%
    }{%
      \mint@{#1}{#2}{}{}%
    }%
  }%
}
\def\mint@sub#1#2_#3{%
  \@ifnextchar^{%
    \mint@sub@sup{#1}{#2}{#3}%
  }{%
    \mint@{#1}{#2}{#3}{}%
  }%
}
\def\mint@sup#1#2^#3{%
  \@ifnextchar_{%
    \mint@sup@sub{#1}{#2}{#3}%
  }{%
    \mint@{#1}{#2}{}{#3}%
  }%
}
\def\mint@sub@sup#1#2#3^#4{%
  \mint@{#1}{#2}{#3}{#4}%
}
\def\mint@sup@sub#1#2#3_#4{%
  \mint@{#1}{#2}{#4}{#3}%
}
\newcommand*{\mint@}[4]{%
  % #1: \limits, \nolimits, \displaylimits
  % #2: overlay symbol: -, =, \ldots
  % #3: subscript
  % #4: superscript
  \mathop{}%
  \mkern-\thinmuskip
  \mathchoice{%
    \mint@@{#1}{#2}{#3}{#4}%
        \displaystyle\textstyle\scriptstyle
  }{%
    \mint@@{#1}{#2}{#3}{#4}%
        \textstyle\scriptstyle\scriptstyle
  }{%
    \mint@@{#1}{#2}{#3}{#4}%
        \scriptstyle\scriptscriptstyle\scriptscriptstyle
  }{%
    \mint@@{#1}{#2}{#3}{#4}%
        \scriptscriptstyle\scriptscriptstyle\scriptscriptstyle
  }%
  \mkern-\thinmuskip
  \int#1%
  \ifx\\#3\\\else_{#3}\fi
  \ifx\\#4\\\else^{#4}\fi
}
\newcommand*{\mint@@}[7]{%
  % #1: limits
  % #2: overlay symbol
  % #3: subscript
  % #4: superscript
  % #5: math style
  % #6: math style for overlay symbol
  % #7: math style for subscript/superscript
  \begingroup
    \sbox0{$#5\int\m@th$}%
    \sbox2{$#5\int_{}\m@th$}%
    \dimen2=\wd0 %
    % => \dimen2 = width of \int
    \let\mint@limits=#1\relax
    \ifx\mint@limits\relax
      \sbox4{$#5\int_{\kern1sp}^{\kern1sp}\m@th$}%
      \ifdim\wd4>\wd2 %
        \let\mint@limits=\nolimits
      \else
        \let\mint@limits=\limits
      \fi
    \fi
    \ifx\mint@limits\displaylimits
      \ifx#5\displaystyle
        \let\mint@limits=\limits
      \fi
    \fi
    \ifx\mint@limits\limits
      \sbox0{$#7#3\m@th$}%
      \sbox2{$#7#4\m@th$}%
      \ifdim\wd0>\dimen2 %
        \dimen2=\wd0 %
      \fi
      \ifdim\wd2>\dimen2 %
        \dimen2=\wd2 %
      \fi
    \fi
    \rlap{%
      $#5%
        \vcenter{%
          \hbox to\dimen2{%
            \hss
            $#6{#2}\m@th$%
            \hss
          }%
        }%
      $%
    }%
  \endgroup
}
\numberwithin{equation}{section}
\theoremstyle{plain}
\newtheorem{thm}{Theorem}[section]
\newtheorem{prop}[thm]{Proposition}
\newtheorem{cor}[thm]{Corollary}
\theoremstyle{definition}
\newtheorem{rem}[thm]{Remark}
\newcommand{\G}{\mathbb G}
\newcommand{\R}{\mathbb{R}^n}
\long\def\symbolfootnote[#1]#2{\begingroup
\def\thefootnote{\fnsymbol{footnote}}\footnote[#1]{#2}\endgroup}
\numberwithin{equation}{section}
\numberwithin{equation}{section}
\begin{document}
\title[Anisotropic R\'enyi entropy and Heisenberg uncertainty principle] {Sharp upper bound for anisotropic R\'enyi entropy and Heisenberg uncertainty principle}
\author{Marianna Chatzakou, Michael Ruzhansky and Anjali Shriwastawa}

\address[Marianna Chatzakou]{\endgraf Department of Mathematics: Analysis,  Logic and Discrete       Mathematics, \endgraf Ghent University, Ghent, Belgium
}
\email{marianna.chatzakou@ugent.be}

\address[Michael Ruzhansky]{\endgraf Department of Mathematics: Analysis,  Logic and Discrete       Mathematics, \endgraf Ghent University, Ghent, Belgium
\endgraf and\endgraf
School of Mathematical Sciences, Queen Marry University of London, \endgraf United Kingdom}
\email{michael.ruzhansky@ugent.be}

\address[
 Anjali Shriwastawa]{
 \endgraf
  DST-Centre for Interdisciplinary Mathematical Sciences  \endgraf
  Banaras Hindu University, Varanasi-221005, India}
\email{anjalisrivastava7077@gmail.com}

\thanks{{\em 2020 Mathematics Subject Classification: } 26D10, 22E30, 45J05.}

\keywords{R\'enyi entropy; Shannon entropy; Shannon inequality;  Folland-Stein-homogeneous groups; Heisenberg uncertainty principle}

\maketitle

\begin{abstract} 
 In this paper, we prove the  anisotropic Shannon inequality for the R\'enyi entropy with the best constant on Folland-Stein homogeneous Lie groups.  As a consequence, we also prove the optimal Shannon inequality in the same setting.  Using a logarithmic Sobolev inequality in the setting of stratified groups, we prove a Heisenberg-type uncertainty principle in the latter setting.
\end{abstract}

%\tableofcontents

\section{Introduction}
Shannon and R\'enyi entropies were initially served as characteristics of probability distributions. To begin with, the Shannon entropy was introduced by Shannon in \cite{Shannon}, and  for a discrete random variable $X$ with distribution $p(x)$ it is given via 
\[
h[X]=-\sum_i p(x_i)\log p(x_i)\,.
\]
 In the continuous setting for a continuous random variable $X$ with (probability) density $u(x)$, the natural analog of the previous definition is as follows:
\begin{align}\label{shannon}
    h[X]:=h[u]=-\int_{\mathbb{R}^n}u(x)\log u(x)\,dx\,.
\end{align}
Clearly the Shannon entropy \eqref{shannon} makes sense for a nonnegative $u \in L^1(\mathbb{R}^n)$ such that $\|u\|_{L^1(\mathbb{R}^n)}=1$. In the sequel we will write $h[u]$ instead of $h(X)$, to denote the Shannon entropy of the continuous random variable $X$ with density $u$. Let us point out that in the continuous case that we study here, $p(x)$ is not a probability, but a probability density, and the latter are not the same. Indeed, the quantity $h[X]$, for $X$ being a discrete random variable, is always non-positive, while if $X$ is the continuous random variable which is uniformly distributed over the interval $(a,b)$; that is 
\[
u(x)=\begin{cases}
    \frac{1}{b-a}\,,\quad \text{if}\quad x \in (a,b)\\
    0\,,\quad \quad \text{otherwise}\,,
\end{cases}
\]
then $h[X]=h[u]=-\log(b-a)$, and clearly $h[u]>0$ if $b-a<1$, see e.g. \cite{Rioul}. Therefore, in the continuous case, one cannot, strictly speaking, talk about the ``amount of information'' represented by the entropy.

There are other features of the Shannon continuous  entropy that make its study a quite involved subject; see for example \cite{M13}. Despite these difficulties, continuous informatics is a field of wide interest, c.f. the monograph of Ihara \cite{Ihara}, or the work of Conrad \cite{Conrad}. Particularly, in relation to partial differential equations, the conclusion of the Boltzman H-Theorem, see e.g. \cite{Villani}, holds true for  the solution to the heat equation on $\mathbb{R}^n$
\begin{equation}
    \label{heat.eq}
    \partial_t u=\Delta u\,, \quad t>0\,;
\end{equation}
that is the Shannon entropy \eqref{shannon} is not decreasing in time, since 
\[
\frac{d}{dt}h[u(x,t)]=\int_{\mathbb{R}^n}\frac{1}{u(x,t)}|\nabla u(x,t)|^2\, dx \geq 0\,.
\]
Analogously, the R\'enyi entropy, introduced by R\'enyi in \cite{Renyi}, is for nonnegative function $u \geq 0$ with $\|u\|_{L^1(\mathbb{R}^n)}=1$, given by 
\begin{equation}
\label{renyi}
h_\alpha[u]=\frac{1}{1-\alpha} \log \int_{\mathbb{R}^n}u(x)^\alpha\,dx\,,
\end{equation}
``corresponds'' to the nonlinear diffusion equations 
\[
\partial_t u= \Delta u^\alpha\,,\quad t>0, \alpha>0, \alpha \neq 1\,.
\]
It is easy to check that the R\'enyi entropy \eqref{renyi} is an extension of the Shannon entropy \eqref{shannon} for different values of $\alpha$ in the sense that 
\[
\lim_{\alpha\rightarrow 1}h_{\alpha}[u]=h[u]\,,
\]
for any function $u$ as above. Both Shannon and R\'enyi entropies, in both discrete and continuous settings, are monotone in the sense that they increase whenever an independent random variable is added, and hence a natural question is whether one can find an upper bound for them; in the Euclidean setting $\mathbb{R}^n$, this is given by Shannon's inequality \cite{Shannon, Suguro} which is an upper bound  of the form:
\begin{equation}
    \label{shannon.in.intro}
    h[u] \leq \frac{n}{2} \log \left(\frac{2 \pi e}{n} \int_{\mathbb{R}^n}|x|^2 u(x)\,dx \right)\,.
\end{equation}
Hence inequality \eqref{shannon.in.intro}  shows that the Shannon entropy of a function $u$  is bounded by the second moment of $u$, and the constant $\frac{2\pi e}{n}$ is the best possible. In the case of the R\'enyi entropy, this upper bound reads as follows:
\begin{equation}
    \label{renyi.in.intro}
    h_{\alpha}[u]\leq \frac{n}{b} \log \left(C_b \int_{\mathbb{R}^n}|x|^b u(x)\,dx \right)\,,\quad \alpha \in \mathbb{R}^{+}\setminus \{1\}\,;
\end{equation}
that is the R\'enyi entropy of the function $u$ is bounded by the $b^{\text{th}}$-moment of $u$, where $b>0$ depends on the range of $\alpha$. Both inequalities \eqref{shannon.in.intro} and \eqref{renyi.in.intro}  are valid for functions $u$ in some suitable weighted Lebesgue spaces. Inequality \eqref{renyi.in.intro} is sharp, and the constant $C_b$ in \eqref{renyi.in.intro} depends on the parameter $b$ on the right-hand side of \eqref{renyi.in.intro}, which in turn depends on the parameter $\alpha$ and is given explicitly in \cite{Suguro} where the aforementioned inequality is proved. For the so-called Shannon inequality in the Euclidean setting \eqref{shannon.in.intro} we refer to the paper \cite{Ogawa}, while Shannon's inequality has also been proved in the general  setting of homogeneous Lie groups in \cite{CKR}. Properties of both entropies have been widely studied; see e.g. \cite{BMM} and references therein. 

Applications of Shannon's inequality \eqref{shannon.in.intro} and Shannon's inequality for the R\'enyi entropy also include the positivity of the relative Lyapunov functional
\[
H_\alpha[u|v]=H_\alpha[u]-H_\alpha[v]\,,
\]
where $H_\alpha[\cdot]$ is the Lyapunov functional, see e.g. \cite{New}, given by 
\[
H_\alpha[u]=\frac{1}{\alpha-1}\int_{\mathbb{R}^{n}}u(x)^\alpha \,dx +\frac{1}{2}\int_{\mathbb{R}^{n}}|x|^2 u(x)\,dx\,,
\]
see \cite{Suguro} and \cite{Toscani}. Importantly, the combination of Shannon inequality \eqref{shannon.in.intro} together with the version of logarithmic Sobolev inequality thanks to Stam \cite{Stam} 
\begin{equation}
    \label{Stam}
    h[u] \geq -\frac{n}{2} \log \left(\frac{1}{2 n \pi e}\int_{\mathbb{R}^n} \frac{1}{u(x)}|\nabla u(x)|^2\,dx \right)\,,
\end{equation}
which holds true for non-negative $u \in L^1(\mathbb{R}^n)$ with $u^{1/2} \in H^1(\mathbb{R}^n)$, gives rise to the Heisenberg uncertainty principle; that is for such $u$ we have 
\begin{equation}
    \label{heisenberg.prin}
    n\leq \left(\int_{\mathbb{R}^n}|x|^2 u(x)\,dx \right)^{\frac{1}{2}}\left(\int_{\mathbb{R}^n}\frac{1}{u(x)}|\nabla u(x)|^2\,dx \right)^{\frac{1}{2}}\,,
\end{equation}
where $n$ is the best possible. Still in the Euclidean setting, Carillo and Toscani \cite{CT} proved the logarithmic Sobolev-type inequalities for the R\'enyi entropy; see also \cite{ST}.

The setting of the current work is that of homogeneous Lie groups introduced in Section \ref{preli}. In this setting, in \cite{CKR} the authors proved the (anisotropic) Shannon inequality, with an explicit constant that is sharp, see the discussion that follows after Remark \ref{rem.notsharp}. Here we prove the  analogue of Shannon's inequality for the R\'enyi entropy as in \eqref{renyi.in.intro} for the setting of homogeneous Lie groups, see Theorem \ref{mainresult}. For the proof of the latter, we follow  the lines in \cite{Suguro} and we prove the aforementioned inequality with the best constant as it happens in \cite{Suguro}, see Corollary \ref{cor.sharp}. Later on, in Theorem \ref{thm.Shann}, we prove the Shannon inequality as in \eqref{renyi.in.intro} in the setting of homogeneous Lie groups, using the fact that the Shannon entropy is the limiting case when $\alpha \rightarrow 1$ of the R\'enyi entropy. The Shannon inequality in the aforesaid setting was also proved in \cite{CKR} using different methods and the constant there coincides with ours since both are optimal. Our final result is the Heisenberg uncertainty principle in the setting of stratified Lie groups, see Corollary \ref{cor.unc}. The latter is proved by combining Shannon's inequality with a version of logarithmic Sobolev inequality in the aforesaid setting  that is  derived by the logarithmic Sobolev inequality as in \cite{CKR1}; see Theorem \ref{thm.rev.LS}.

\section{Preliminaries} \label{preli}
In this section, we give a brief description of our setting of homogeneous Lie groups. Such groups were initiated by Folland and Stein, see \cite{FS}, and later on, they became a subject of study by many authors; see \cite{FR} and references therein.  The notation that we adopt in the sequel follows the more recent open access monograph on homogeneous Lie groups \cite{FR}.

A connected, simply connected Lie group $\mathbb{G}\cong \mathbb{R}^N$ whose Lie algebra $\mathfrak{g}$ admits a gradation of the form \begin{equation}\label{strat}\mathfrak{g}=\oplus_{j=1}^{\infty}V_{j}\,,\end{equation} where finitely many $V_j$'s are nonzero and satisfy relations of the form $[V_{i},V_{j}]\subset V_{i+j}$ is called a graded Lie group. Graded Lie groups are naturally homogeneous Lie groups meaning that there exists a dilation mapping denoted as $D_\lambda: \mathbb{R}^N \rightarrow \mathbb{R}^N$, where $\lambda > 0$, that is an automorphism of $\mathbb{G}$, and so also of $\mathfrak{g}$. Particularly, for $\lambda>0$, the mapping $D_\lambda$ acts on $x \in \mathbb{G}$ via
\begin{equation}
D_\lambda(x)= (\lambda^{v_1}x_1,\lambda^{v_2}x_2,\ldots,\lambda^{v_N}x_N) ,\quad v_1,v_2,\dots,v_N > 0\,.
\end{equation}

The so-called weights $v_1,\cdots, v_n$ determine the homogeneous dimension, usually denoted by $Q$,  of the homogeneous group $\mathbb{G}$ in the following way:
$$Q=v_1+v_2+...+v_N.$$

A homogeneous group is unimodular and the unique Haar measure denoted by $dx$ on $\mathbb{G}$ is the Lebesgue measure on the underlying manifold $\mathbb{R}^N$. Hence for $\omega \subset \mathbb{G}$ being a measurable set in $\mathbb{G}$, if  $|\omega|$ stands for the volume of $\omega$, then for $\lambda > 0$ we have
\begin{equation}
|D_\lambda(\omega)|=\lambda^Q |\omega| \quad \text{and} \quad \int_{\mathbb{G}} f(D_\lambda (x)) dx = \lambda^{-Q}\int_{\mathbb{G}} f(x) dx.
\end{equation}

For any homogeneous Lie group $\mathbb{G}$, there exists a homogeneous (with respect to the dilations determined by $D_\lambda$ on $\G$) quasi-norm $|\cdot|$ defined on $\mathbb{G}$; that is $|\cdot|:\mathbb{G} \rightarrow [0,\infty)$ is  a continuous, non-negative function  that satisfies the following conditions:
\begin{itemize}
\item[(i)] For all $x \in \mathbb{G}$, $|x|=|x^{-1}|$.
\item[(ii)] For all $x \in \mathbb{G}$ and $\lambda > 0$, $|\lambda x|=\lambda |x|$.
\item[(iii)] $|x|=0$ if and only if $x=0$.
\end{itemize}
We note that in the sequel we use the notation $|\cdot|$ to denote both the volume of a measurable set in $\mathbb{G}$ and the homogeneous quasi-norm of an element $x \in \mathbb{G}$, and the meaning of $|\cdot|$ is each appearance will be clear from the context.

%\textbf{Polar decomposition in homogeneous Lie groups:}
%Consider a homogeneous Lie group $\mathbb{G}$ equipped with the homogeneous quasi-norm $|\cdot|$. Let $\mathfrak{S} := \{x \in \mathbb{G}: |x| = 1\}$ represent the unit sphere in $\mathbb{G}$. It follows that there exists a unique Radon measure $\sigma$ defined on $\mathfrak{S}$ such that, for any $u \in L^1(\mathbb{G})$, the following holds:
%\begin{align}
 %   \int_\mathbb{G}u(x)\,dx=\int_0^\infty\int_\mathfrak{S}u(ry)\,r^{Q-1}\,d\sigma(y)\,dr.
%\end{align}
Finally, let us introduce the following Lebesgue spaces that are useful for our purposes: For $a>0$, the weighted Lebesgue space denoted by $L^{1}_{a}(\mathbb{G})$ is defined as follows:
\[
L_{a}^{1}(\mathbb{G})=\{u \in L^{1}(\mathbb{G}) \quad \text{such that}\quad |x|^{a}u\in L^1(\mathbb{G}) \}\,,
\]
where $|x|$ stands for the quasi-norm of $x \in \mathbb{G}$.
% The weighted Lebesgue space on homogeneous Lie group is, denoted as $L^{p,\alpha}(\mathbb{G})$, where $\alpha>0$. It can be expressed as follows:

%\begin{align*}
%L^{p,\alpha}(\mathbb{G}) := \left\{ u \in L^p_{\text{loc}}(\mathbb{G})  ;  \langle x\rangle^\alpha u\in L^p(\mathbb{G}) \right\}.
%\end{align*}
%In the above expression $\langle x \rangle=\sqrt{1+|x|^2}$ for all $x\in \mathbb{G}$, and $|\cdot|$ represents a quasi-norm on a homogeneous Lie group $\mathbb{G}$.\\
%Now, we will recall Jensen's inequality in the context of homogeneous Lie groups, as it will be employed in the proof of the main result of this paper.\\
%\textbf{Jensen's inequality for convex functions on $\mathbb{G}$}:
%Let $\mathbb{G}$ be a homogeneous group equipped with a measure $\mu$, and let $f: \mathbb{G} \rightarrow \mathbb{R}$ be a measurable function. If $\mu$ is a probability measure on $\mathbb{G}$, and $\Phi:\mathbb{R}\rightarrow\mathbb{R}$ is a convex function, then we have:
%\begin{align}
%\Phi\left(\int_\Omega f d\mu\right)\le \int_\Omega \Phi o f d\mu.
%\end{align}
%\textbf{Jensen's inequality for concave functions on $\mathbb{G}:$}
%Let $\mathbb{G}$ be a homogeneous group equipped with a measure $\mu$, and let $f: \mathbb{G} \rightarrow \mathbb{R}$ be a measurable function. If $\mu$ is a probability measure on $\mathbb{G}$, and $\Phi:\mathbb{R}\rightarrow\mathbb{R}$ is a convex function, then  we have:
%\begin{align}
%\Phi\left(\int_\Omega f d\mu\right)\ge \int_\Omega \Phi o f d\mu.
%\end{align}
\section{Main Results}\label{mainresult}
In this section, we establish the main results of the paper starting with proving the Shannon inequality for the R\'enyi entropy in the setting of homogeneous Lie groups.
\begin{thm} \label{mainthm} Let $\mathbb{G}$ be a homogeneous Lie group of homogeneous dimension $Q$, and let $|\cdot|$ be a homogeneous quasi-norm on $\G$.  Suppose that $\alpha>0$, $\alpha \neq 1$, and 
$$b> \begin{cases} 
Q\left(\frac{1}{\alpha}-1 \right)\,,\quad &\text{if}\quad 0<\alpha<1,\\0\,, \quad &\text{if}\quad \alpha>1.
\end{cases}$$
Then, for any nonnegative function $u \in L^1_b(\G)$ with $\| u\|_{L^1(\G)}=1$,  the inequality
\begin{equation}\label{eq1.1}
    \frac{1}{1-\alpha} \log \int_\G u(x)^\alpha\, dx \leq \frac{Q}{b} \log \left( A_{\alpha, Q,b}^{\frac{b}{Q}} \int_\G |x|^b u(x) dx \right),
\end{equation}
holds, where 
\begin{align*} A_{\alpha, Q,b}:=\begin{cases}
  \left( \frac{b}{|\mathfrak{S}|}\frac{\Gamma\left(\frac{1}{1-\alpha}\right)}{\Gamma\left(\frac{1}{1-\alpha}-\frac{Q}{b}\right)\Gamma(\frac{Q}{b})} \right)^{-1}\left(\frac{\alpha b}{\alpha b-Q(1-\alpha)}\right)^{\frac{1}{1-\alpha}}\left(\frac{\alpha b-Q(1-\alpha)}{Q(1-\alpha)}\right)^\frac{Q}{b}\,, &\text{if}\quad 0<\alpha<1,\\ \frac{\alpha b}{Q(\alpha-1)}\left(\frac{\alpha b +Q(\alpha-1)}{\alpha b} \right)^{\frac{Q(\alpha-1)+b}{Q(\alpha-1)}}\left(\frac{b}{|\mathfrak{S}|}\frac{\Gamma\left(\frac{Q}{b} \right) \Gamma\left(\frac{\alpha}{\alpha-1} \right)}{\Gamma\left( \frac{\alpha}{\alpha-1}+\frac{Q}{b}\right)}\right)^{-\frac{b}{Q}}\,,\quad &\text{if}\quad\alpha>1\,,
\end{cases}
\end{align*} 
 and $|\mathfrak{S}|$ stands for the $Q-1$ dimensional surface measure of the unit (quasi-)sphere with respect to $|\cdot|.$
\end{thm}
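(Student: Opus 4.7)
Write $M_b[u]:=\int_\G|x|^b u\d x$. I would follow Suguro's approach \cite{Suguro}, which only needs two group-theoretic inputs: the polar-coordinate formula $\int_\G f(|x|)\d x=|\mathfrak S|\int_0^\infty f(r)r^{Q-1}\d r$ and the behaviour of the dilations $D_\lambda$. The plan has three parts: (i) exhibit a Barenblatt-type extremiser $v$ for which \eqref{eq1.1} holds with equality; (ii) prove a single linear inequality relating $\int_\G u^\alpha\d x$ to $M_b[u]$ via convexity/concavity at $v$; and (iii) apply this to the dilation $u_\lambda(x):=\lambda^Q u(D_\lambda x)$ and optimise in $\lambda>0$ to upgrade the linear bound to the multiplicative form of \eqref{eq1.1}.

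\textbf{Extremiser and integrals.} Solving the Euler--Lagrange equation of the Lyapunov-type functional
\[
\tfrac{1}{\alpha-1}\int_\G u^\alpha\d x\;\pm\;\tfrac{c}{b}M_b[u]
\]
(with appropriate sign) suggests the extremiser
\[
v(x)=\begin{cases} C_1(C_2+|x|^b)^{-1/(1-\alpha)}, & 0<\alpha<1,\\ C_1(C_2-|x|^b)_+^{1/(\alpha-1)}, & \alpha>1,\end{cases}
\]
for suitable $C_1,C_2>0$. The assumption $b>Q(1/\alpha-1)$ when $0<\alpha<1$ is precisely what makes $v,v^\alpha,|x|^b v$ simultaneously integrable. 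After the substitution $t=r^b/C_2$, each of $\|v\|_{L^1(\G)}$, $\int_\G v^\alpha\d x$ and $M_b[v]$ reduces to a beta integral whose value is expressible through the Gamma-function combinations appearing in $A_{\alpha,Q,b}$; imposing $\|v\|_{L^1(\G)}=1$ fixes $C_1$ in terms of $C_2$, leaving the one-parameter Barenblatt family $\{v_{C_2}\}_{C_2>0}$.

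\textbf{Convexity and scaling.} By convexity (for $\alpha>1$) or concavity (for $0<\alpha<1$) of $s\mapsto s^\alpha$, the tangent bound
\[
u(x)^\alpha\;\geq\; v(x)^\alpha+\alpha\,v(x)^{\alpha-1}\bigl(u(x)-v(x)\bigr)
\]
holds pointwise (with the inequality reversed in the concave case). By the Barenblatt form, $\alpha v^{\alpha-1}$ is an affine function of $|x|^b$ on $\operatorname{supp}(v)$; in the compact case $(\alpha>1)$ this affine extension dominates $v^{\alpha-1}$ off $\operatorname{supp}(v)$, so against $u\geq 0$ the pointwise bound still integrates favourably. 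Using $\|u\|_{L^1(\G)}=\|v\|_{L^1(\G)}=1$ to cancel the constant terms gives a linear bound of the form
\[
\int_\G u^\alpha\d x\;\geq\; \int_\G v^\alpha\d x-\alpha C_1^{\alpha-1}\bigl(M_b[u]-M_b[v]\bigr)
\]
(with the direction reversed for $0<\alpha<1$). Applying this to $u_\lambda$, using the identities $\int_\G u_\lambda^\alpha\d x=\lambda^{Q(\alpha-1)}\int_\G u^\alpha\d x$ and $M_b[u_\lambda]=\lambda^{-b}M_b[u]$, and then optimising over $\lambda>0$, the two-term linear bound collapses into a single multiplicative one of the form $\bigl(\int_\G u^\alpha\d x\bigr)^{1/(1-\alpha)}\leq A_{\alpha,Q,b}\,M_b[u]^{Q/b}$. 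Taking logarithms gives \eqref{eq1.1}, and the equality cases along $\{v_{C_2}\}_{C_2>0}$ confirm sharpness.

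\textbf{Main obstacle.} The chief technical burden is the bookkeeping of Gamma factors. The two regimes $0<\alpha<1$ and $\alpha>1$ must be kept cleanly separated because the profile switches between a globally defined Student-$t$ type function (handled by $\int_0^\infty t^{p-1}(1+t)^{-p-q}\d t=B(p,q)$) and a compactly supported Barenblatt profile (handled by $\int_0^1 t^{p-1}(1-t)^{q-1}\d t=B(p,q)$); and the three Gamma evaluations of $\|v\|_{L^1(\G)}$, $\int_\G v^\alpha\d x$, $M_b[v]$ have to be assembled together with the scaling optimum so as to recover the stated form of $A_{\alpha,Q,b}$. A natural consistency check is that both branches must limit, as $\alpha\to 1$, to the sharp Shannon constant of \cite{CKR}, which is how Theorem \ref{thm.Shann} will be deduced from Theorem \ref{mainthm}.
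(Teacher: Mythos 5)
Your strategy is sound and, modulo two small repairs noted below, it does prove the theorem with the stated constant; but the way you obtain the key linear inequality differs from the paper's. Both arguments share the same architecture: compare $u$ with the profile $\phi_1(x)=C_1(1+|x|^b)^{1/(\alpha-1)}$ (resp.\ $\phi_2(x)=C_2(1-|x|^b)_+^{1/(\alpha-1)}$), extract a linear relation between $\int_\G u^\alpha\,dx$ and $\int_\G|x|^bu\,dx$, then feed in $u_\lambda(x)=\lambda^Qu(D_\lambda x)$ and minimise over $\lambda>0$. The paper gets the linear relation by two applications of Jensen's inequality to logarithmic quantities: for $0<\alpha<1$ it bounds the relative entropy $\int_\G u^\alpha\log(\phi_1/u)\,dx$ from above and below against the probability measure $u^\alpha dx/\|u\|_{L^\alpha(\G)}^\alpha$, and for $\alpha>1$ it runs the analogous argument with $G(s)=-s\log s$ applied to $\int_\G u\,\phi_2^{\alpha-1}\,dx$, which in effect reproves H\"older's inequality $\int_\G u\,\phi_2^{\alpha-1}\,dx\le\|u\|_{L^\alpha(\G)}\|\phi_2\|_{L^\alpha(\G)}^{\alpha-1}$. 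You instead linearise $s\mapsto s^\alpha$ at the profile, exploiting that the profile was designed exactly so that $v^{\alpha-1}$ is affine in $|x|^b$ and that $\|u\|_{L^1(\G)}=\|v\|_{L^1(\G)}=1$ cancels the constant term. Your route is shorter, avoids the logarithms entirely until the very last step, and makes the equality case (hence Corollary \ref{cor.sharp}) transparent, since equality in the tangent-line bound forces $u=v$ a.e.; the paper's route avoids any discussion of what happens off $\operatorname{supp}(\phi_2)$ until the elementary observation $(1-|x|^b)_+\ge 1-|x|^b$.

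Two points need fixing. First, your claim that the affine extension \emph{dominates} $v^{\alpha-1}$ off $\operatorname{supp}(v)$ when $\alpha>1$ is backwards: there one has $C_1^{\alpha-1}(C_2-|x|^b)<0=v^{\alpha-1}$, and it is precisely the inequality $v^{\alpha-1}\ge C_1^{\alpha-1}(C_2-|x|^b)$ \emph{everywhere} that allows you to replace $\int_\G v^{\alpha-1}u\,dx$ (which enters the lower bound for $\int_\G u^\alpha\,dx$ with positive coefficient $\alpha$) by the smaller quantity $C_1^{\alpha-1}\bigl(C_2-\int_\G|x|^bu\,dx\bigr)$. The conclusion you draw is correct, but the stated direction would wreck the inequality. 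Second, for the minimisation over $\lambda$ to yield a nontrivial multiplicative bound you must verify that the $\lambda$-independent term left on the favourable side is strictly positive; a short computation using $\int_\G v^\alpha\,dx=C_1^{\alpha-1}\bigl(C_2\mp\int_\G|x|^bv\,dx\bigr)$ reduces this to $C_2+(\alpha-1)\int_\G|x|^bv\,dx>0$ for $\alpha>1$ and $C_2+(1-\alpha)\int_\G|x|^bv\,dx>0$ for $0<\alpha<1$, both of which hold. With these two repairs, and the Beta-function evaluations you outline (which coincide with the paper's Appendix), the argument closes and returns the stated $A_{\alpha,Q,b}$.
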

\begin{proof}[Proof of Theorem \ref{mainthm}]
 Note that it is enough to prove \eqref{eq1.1} for a nonnegative function $u \geq 0$ that is smooth on $\G$. The result will then follow by a density argument. We will treat the cases where $\alpha \in (0,1)$ and $\alpha>1$ separately. Let us first consider the case $\alpha \in (0,1)$. For such values of $\alpha$ and for $b>Q\left( \frac{1}{\alpha}-1\right)$ we consider the auxiliary function $\phi_1$ given by 
$$\phi_1(x)=C_{1}\left(1+|x|^b\right)^{\frac{1}{\alpha-1}}\,,$$ with $C_1$ computed in the Appendix, see \eqref{C_1}.  Using Jensen's inequality for the convex function $\log 1/t$ and the the probability measure $\frac{u(x)^{\alpha}}{\|u\|^{\alpha}_{L^{\alpha}(\G)}}dx$ we estimate from above the relative entropy of $u$ and $\phi_1$ as follows
\begin{equation}\label{1}
\begin{split}
    \int_{\mathbb{G}} u(x)^\alpha \log \frac{\phi_1(x)}{u(x)}dx &\ = \frac{\|u\|^{\alpha}_{L^{\alpha}(\G)}}{\alpha}  \int_{\G} \frac{u(x)^{\alpha}}{\|u\|^{\alpha}_{L^{\alpha}(\G)}}\log \frac{\phi_1(x)^{\alpha}}{u(x)^{\alpha}}dx \\ &
     \leq  \frac{\|u\|^{\alpha}_{L^{\alpha}(\G)}}{\alpha} \log \left( \frac{1}{\|u\|^{\alpha}_{L^{\alpha}(\G)}} \int_{\G} \phi_1(x)^{\alpha}\,dx\right)\\ &
     =  \frac{\|u\|^{\alpha}_{L^{\alpha}(\G)}}{\alpha} \log \frac{\|\phi_1\|^{\alpha}_{L^{\alpha}(\G)}}{\|u\|^{\alpha}_{L^{\alpha}(\G)}}\,,
    \end{split}
\end{equation}
where $\|\phi_1\|^{\alpha}_{L^{\alpha}(\G)}$ is computed in Appendix, see \eqref{phia}, as 
\[
\|\phi_1\|^{\alpha}_{L^{\alpha}(\G)}=C_{1}^{\alpha-1} \frac{\alpha b }{\alpha b- Q(1-\alpha)}\,,
\]
where $C_1$ is given in \eqref{C_1}. 

Let us now give a lower bound for the relative  entropy of $u$ and $\phi_1$. By Jensen's inequality for the same probability measure, and since $\alpha<1$, we have
\begin{equation}
    \label{2}
    \begin{split}
    \int_{\G}u(x)^{\alpha} \log \frac{\phi_1(x)}{u(x)}\,dx &\ = \frac{1}{\alpha-1} \int_{\G} \frac{\|u\|_{L^{\alpha}(\G)}^{\alpha}}{\|u\|_{L^{\alpha}(\G)}^{\alpha}} u(x)^{\alpha} \log \frac{1+|x|^b}{u(x)^{\alpha-1}}\,dx+\|u\|_{L^{\alpha}(\G)}^{\alpha} \log C_1  \\ & 
    \geq \frac{\|u\|_{L^{\alpha}(\G)}^{\alpha}}{\alpha-1} \log \left(\frac{1}{\|u\|_{L^{\alpha}(\G)}^{\alpha}}+\frac{1}{\|u\|_{L^{\alpha}(\G)}^{\alpha}}\int_{\G}|x|^b u(x)\,dx\right) \\ &
    +\|u\|_{L^{\alpha}(\G)}^{\alpha} \log C_1\,,
    \end{split}
\end{equation}
where we have used the fact that $\|u\|_{L^1(\G)}=1$. A combination of \eqref{1} and \eqref{2} gives
\begin{equation}
    \label{12}
    \frac{1}{\alpha-1} \log \left(\frac{1}{\|u\|_{L^{\alpha}(\G)}^{\alpha}}+\frac{1}{\|u\|_{L^{\alpha}(\G)}^{\alpha}}\int_{\G}|x|^b u(x)\,dx \right)+\log C_1 \leq \frac{1}{\alpha} \log \frac{\|\phi_1\|_{L^{\alpha}(\G)}^{\alpha}}{\|u\|_{L^{\alpha}(\G)}^{\alpha}}\,.
\end{equation}
Now, let us apply inequality \eqref{12} to the function $\tilde{u}(x):=\lambda^{Q}u(D_\lambda(x))$. To this end, first we note that 
\begin{equation}
    \label{left12}
       \frac{1}{\|\tilde{u}\|_{L^{\alpha}(\G)}^{\alpha}}+\frac{1}{\|\tilde{u}\|_{L^{\alpha}(\G)}^{\alpha}}\int_{\G}|x|^b \tilde{u}(x)\,dx=\frac{\lambda^{-Q(\alpha-1)}}{\|u\|_{L^{\alpha}(\G)}^{\alpha}}+\frac{\lambda^{-Q(\alpha-1)-b}}{\|u\|_{L^{\alpha}(\G)}^{\alpha}}\int_{\G}|x|^b u(x)\,dx\,,
\end{equation}
since 
\begin{eqnarray*}
 \|\tilde{u}\|_{L^{\alpha}(\G)}^{\alpha} & = & \int_{\G}|\lambda^{Q}u(D_\lambda(x))|^\alpha\,dx \\
  & \stackrel{y=D_\lambda(x)}= & \int_{\G}\lambda^{Q\alpha}|u(y)|^\alpha (\lambda^{-Q}dy) \\
  & = & \lambda^{Q(\alpha-1)}\|u\|^{\alpha}_{L^{\alpha}(\G)}\,.
\end{eqnarray*}
Similarly we have
\begin{equation}
    \label{right12}
    \frac{1}{\alpha} \log \frac{\|\phi_1\|^{\alpha}_{L^{\alpha}(\G)}}{\|\tilde{u}\|^{\alpha}_{L^{\alpha}(\G)}}=\frac{1}{\alpha} \log \frac{\lambda^{-Q(\alpha-1)}\|\phi_1\|^{\alpha}_{L^{\alpha}(\G)}}{\|u\|^{\alpha}_{L^{\alpha}(\G)}}=\frac{1}{\alpha-1} \log \frac{\lambda^{-Q(\alpha-2+\frac{1}{\alpha})}\|\phi_1\|^{\alpha-1}_{L^{\alpha}(\G)}}{\|u\|^{\alpha-1}_{L^{\alpha}(\G)}}\,.
\end{equation}
Hence, with the use of \eqref{left12} and \eqref{right12}, inequality \eqref{12} for the function $\tilde{u}$ reads as follows
\begin{equation*}
    \begin{split}
    &\ \frac{1}{\alpha-1}  \log\left(\frac{\lambda^{-Q(\alpha-1)}}{\|u\|_{L^{\alpha}(\G)}^{\alpha}}+\frac{\lambda^{-Q(\alpha-1)-b}}{\|u\|_{L^{\alpha}(\G)}^{\alpha}}\int_{\G}|x|^b u(x)\,dx \right) +\log C_1  \\&
    \leq  \frac{1}{\alpha-1} \log \frac{\lambda^{-Q(\alpha-2+\frac{1}{\alpha})}\|\phi_1\|^{\alpha-1}_{L^{\alpha}(\G)}}{\|u\|^{\alpha-1}_{L^{\alpha}(\G)}}
    \,.
    \end{split}
\end{equation*}
Using the properties of the logarithm, the latter inequality can be rearranged as follows
\begin{equation}\label{extra}
    \begin{split}
        &\ \frac{1}{\alpha-1} \log \left(\lambda^{Q\left( \frac{1}{\alpha}-1\right)}+\lambda^{Q\left(\frac{1}{\alpha}-1 \right)-b}\int_{\G}|x|^b u(x)\,dx \right)\\&
       \leq \frac{1}{\alpha-1} \log\left(\frac{C_{1}^{1-\alpha} \|\phi_1\|^{\alpha-1}_{L^{\alpha}(\G)}}{\|u\|_{L^{\alpha}(\G)}^{-1}} \right)\,,
    \end{split}
\end{equation}
and since
\[
\frac{1}{\alpha-1} \log\left(\frac{C_{1}^{1-\alpha} \|\phi_1\|^{\alpha-1}_{L^{\alpha}(\G)}}{\|u\|_{L^{\alpha}(\G)}^{-1}} \right)=- \log \left(\frac{C_1}{\|\phi_1\|_{L^{\alpha}(\G)}}\|u\|_{L^{\alpha}(\G)}^{\frac{1}{1-\alpha}} \right)
\]
multiplying \eqref{extra} by $-1$ we get 
\begin{equation}
    \label{anyla}
    \log \left(\frac{C_1}{\|\phi_1\|_{L^{\alpha}(\G)}}\|u\|_{L^{\alpha}(\G)}^{\frac{1}{1-\alpha}} \right) \leq \frac{1}{1-\alpha} \log \left(\lambda^{Q\left( \frac{1}{\alpha}-1\right)}+\lambda^{Q\left(\frac{1}{\alpha}-1 \right)-b}\int_{\G}|x|^b u(x)\,dx \right)\,,
\end{equation}
where the last inequality holds true for any $\lambda>0$. In view of minimising the right-hand side of \eqref{anyla} over $\lambda>0$, we set 
\[
M(\lambda):= \lambda^{Q\left( \frac{1}{\alpha}-1\right)}+\lambda^{Q\left(\frac{1}{\alpha}-1 \right)-b}\int_{\G}|x|^b u(x)\,dx\,.
\]
If $\lambda^*$ minimises $M(\lambda)$, then $ M'(\lambda^*)=0$, where $ M'(\lambda^*)$ is given by 
\[
   Q\left(\frac{1}{\alpha}-1\right)(\lambda^{*})^{Q(\frac{1}{\alpha}-1)-1}+\left[Q\left(\frac{1}{\alpha}-1\right)-b\right](\lambda^*)^{Q(\frac{1}{\alpha}-1)-b-1}\int_{\mathbb{G}}|x|^b u(x)dx\,.
\]
 One can easily check that for
\begin{align*}
    \lambda^*=\left\{\left(\frac{\alpha b-Q(1-\alpha)}{Q(1-\alpha)}\right)\int_{\mathbb{G}}|x|^b u(x)dx\right\}^\frac{1}{b}
\end{align*}
we have $ M'(\lambda^*)=0$, and we we can compute
\begin{equation*}
\begin{split}
M(\lambda^*) &\ = (\lambda^{*})^{Q\left( \frac{1}{\alpha}-1\right)}\left[1+ (\lambda^{*})^{-b}\int_{\G}|x|^b u(x)\,dx\right] \\&
= \left\{\left(\frac{\alpha b-Q(1-\alpha)}{Q(1-\alpha)}\right)\int_{\mathbb{G}}|x|^b u(x)dx\right\}^{\frac{Q}{b}\left(\frac{1}{\alpha}-1 \right)} \\&
\times \left[1+\frac{Q(1-\alpha)}{\alpha b-Q(1-\alpha)}\left(\int_{\G}|x|^bu(x)\,dx \right)^{-1}\int_{\G}|x|^b u(x)\,dx \right] \\&
=\frac{\alpha b}{\alpha b-Q(1-\alpha)}\left\{\left(\frac{\alpha b-Q(1-\alpha)}{Q(1-\alpha)}\right)\int_{\mathbb{G}}|x|^b u(x)dx\right\}^{\frac{Q}{b}(\frac{1}{\alpha}-1)}\,.
\end{split}
\end{equation*}
Thus, inequality \eqref{anyla} for $\lambda=\lambda^*$ becomes
\begin{align}
   \nonumber&\log\left(\frac{C_{1}\|u\|^{\frac{1}{1-\alpha}}_{L^{\alpha}(\G)}}{\|\phi_1\|_{L^{\alpha}(\G)}}\right)\le \frac{1}{(1-\alpha)}\log\bigg[\left(\frac{\alpha b}{\alpha b-Q(1-\alpha)}\right)\times \\&\nonumber\quad\quad\quad\quad\quad\quad\quad\left\{\left(\frac{\alpha b-Q(1-\alpha)}{Q(1-\alpha)}\right)\int_{\mathbb{G}}|x|^b u(x)dx\right\}^{\frac{Q}{b}(\frac{1}{\alpha}-1)}\bigg]\,,
\end{align}
which in turn implies that 
\begin{align}\label{beforelog}
\|u\|_{L^{\alpha}(\G)}^\frac{\alpha}{1-\alpha}\le \frac{\|\phi_1\|^{\alpha}_{L^{\alpha}(\G)}}{C_{1}^{\alpha}}  \left(\frac{\alpha b}{\alpha b-Q(1-\alpha)}\right)^{\frac{\alpha}{1-\alpha}}\left\{\left(\frac{\alpha b-Q(1-\alpha)}{Q(1-\alpha)}\right)\int_{\mathbb{G}}|x|^b u(x)dx\right\}^{\frac{Q}{b}}\,.
\end{align}
By the monotonicity of the function $\log x$, the latter inequality implies \eqref{eq1.1} for $\alpha \in (0,1)$, with  $A_{\alpha,Q,b}$ given by
\begin{eqnarray*}\label{Aabq1}
    A_{\alpha,Q,b} & = &\frac{\|\phi_1\|^{\alpha}_{L^{\alpha}(\G)}}{C_{1}^{\alpha}}   \left(\frac{\alpha b}{\alpha b-Q(1-\alpha)}\right)^{\frac{\alpha}{1-\alpha}}\left(\frac{\alpha b-Q(1-\alpha)}{Q(1-\alpha)}\right)^\frac{Q}{b}\\
    & = & C^{-1}_{1}\left(\frac{\alpha b}{\alpha b-Q(1-\alpha)}\right)^{\frac{1}{1-\alpha}}\left(\frac{\alpha b-Q(1-\alpha)}{Q(1-\alpha)}\right)^\frac{Q}{b}\,,
\end{eqnarray*}
where we have used the explicit formula for $\|\phi_1\|^{\alpha}_{L^{\alpha}(\G)}$ given by \eqref{phia}. 
\medskip

Let us now treat the case where $\alpha>1$. Let us also consider the auxiliary function $\phi_2$ given by 
\[
\phi_2(x)=C_2(1-|x|^b)_{+}^{\frac{1}{1-\alpha}}\,,
\]
with $C_2$  given in \eqref{C_2corr}. Using Jensen's inequality for the concave function $G(x)=-x\log x$ and with the probability measure $\frac{u(x)^\alpha}{\|u\|_{L^{\alpha}(\G)}^{\alpha}}dx$ we estimate
\begin{eqnarray}
    \int_{\G}u(x)\phi_2(x)^{\alpha-1}\log \frac{\phi_2(x)^{\alpha-1}}{u(x)^{\alpha-1}}\,dx & = & \int_{\mathbb{G}} u(x)^{\alpha}\frac{\phi_2(x)^{\alpha-1}}{u(x)^{\alpha-1}}
\log\frac{\phi_2(x)^{\alpha-1}}{u(x)^{\alpha-1}}dx \nonumber \\
& = & -\int_{\mathbb{G}} u(x)^\alpha\,G\left(\frac{\phi_2(x)^{\alpha-1}}{u(x)^{\alpha-1}}\right)\,dx \nonumber\\
& = &  -\|u\|^{\alpha}_{L^{\alpha}(\G)}\int_{\mathbb{G}} \frac{u(x)^\alpha}{\|u\|_{L^{\alpha}(\G)}^\alpha}G\left(\frac{\phi_2(x)^{\alpha-1}}{u(x)^{\alpha-1}}\right)dx\nonumber \\
& \geq & -\|u\|^{\alpha}_{L^{\alpha}(\G)} G\left(\frac{1}{\|u\|_{L^{\alpha}(\G)}^{\alpha}}\int_{\mathbb{G}}u(x)\,\phi_2(x)^{\alpha-1}dx\right)\nonumber\,.
\end{eqnarray}
Now, if we denote by $K=K(u, \phi_2,\alpha)$ the quantity 
\begin{equation}\label{defK}
K:=\int_{\G}u(x)\phi_2(x)^{\alpha-1}\,dx=\int_{\G}u(x)(C_2(1-|x|^b)_{+}^{\frac{1}{1-\alpha}})^{\alpha-1}\,dx < \infty\,,
\end{equation}
then, by the above computations, and the definition of the function $G$, we get 
\begin{equation}
    \label{1a}
      \int_{\G}u(x)\phi_2(x)^{\alpha-1}\log \frac{\phi_2(x)^{\alpha-1}}{u(x)^{\alpha-1}}\,dx \geq K \log \frac{K}{\|u\|^{\alpha}_{L^{\alpha}(\G)}}\,.
\end{equation}
 On the other hand, by the properties of the logarithm we have  
 \begin{equation}
     \label{obs}
      \int_{\G}u(x)\phi_2(x)^{\alpha-1}\log \frac{\phi_2(x)^{\alpha-1}}{u(x)^{\alpha-1}}\,dx= (\alpha-1)K \int_{\G} \frac{u(x)\phi_2(x)^{\alpha-1}}{K} \log \frac{\phi_2(x)}{u(x)}\,dx\,.
 \end{equation}
 Another application of Jensen's inequality to the right-hand side of \eqref{obs} yields 
 \begin{equation}
     \label{2a}
      \int_{\G}K^{-1}u(x)\phi_2(x)^{\alpha-1}\log \frac{\phi_2(x)^{\alpha-1}}{u(x)^{\alpha-1}}\,dx \leq  (\alpha-1)\log (K^{-1}\|\phi_2\|_{L^{\alpha}(\G)}^{\alpha})\,,
 \end{equation}
 where $\|\phi_2\|_{L^{\alpha}(\G)}^{\alpha}$ has been computed in the Appendix, see \eqref{phib}, as 
 \[
 \|\phi_2\|_{L^{\alpha}(\G)}^{\alpha}= \|\phi_2\|_{L^{\alpha}(\G)}^{\alpha}=C_{2}^{\alpha-1}\frac{\alpha b }{b \alpha+Q(\alpha-1)}\,.
 \]
 Hence by \eqref{obs} and \eqref{2a} we get 
 \begin{equation}
     \label{2aa}
          \int_{\G}u(x)\phi_2(x)^{\alpha-1}\log \frac{\phi_2(x)^{\alpha-1}}{u(x)^{\alpha-1}}\,dx \leq (\alpha-1)K \log \frac{\|\phi_2\|_{L^{\alpha}(\G)}^{\alpha}}{K}\,. 
 \end{equation}
 A combination of \eqref{1a} and \eqref{2aa} gives 
 \begin{equation}
     \label{12a}
     \log \frac{K}{\|u\|_{L^{\alpha}(\G)}^{\alpha}} \leq (\alpha-1) \log \frac{\|\phi_2\|_{L^{\alpha}(\G)}^{\alpha}}{K}\,,
 \end{equation}
 or equivalently 
 \[
 \alpha \log \frac{K}{\|\phi_2\|_{L^{\alpha}(\G)}^{\alpha-1}\|u\|_{L^{\alpha}(\G)}}\leq 0\,,
 \]
 and since $\log x \leq 0$ for $x\leq 1$, the latter implies that 
 \begin{equation}
     \label{Q1}
     K \leq \|\phi_2\|_{L^{\alpha}(\G)}^{\alpha-1}\|u\|_{L^{\alpha}(\G)}\,.
 \end{equation}
Now, by \eqref{defK}, and since clearly $(1-|x|^b)_{+}\geq (1-|x|^b)$, we have 
\begin{eqnarray}
    \label{Q2}
    C^{\alpha-1}_{2} \int_{\G} (1-|x|^b)u(x)\,dx \leq K\,.
\end{eqnarray}
A combination of \eqref{Q1} and \eqref{Q2} gives 
\[
C^{\alpha-1}_{2} \int_{\G} (1-|x|^b)u(x)\,dx \leq \|\phi_2\|_{L^{\alpha}(\G)}^{\alpha-1}\|u\|_{L^{\alpha}(\G)}\,,
\]
or, using the fact that $\|u\|_{L^1(\G)}=1$, the latter can be simplified as follows 
\begin{equation}
    \label{Qf}
    C_{2}^{\alpha-1}\leq C_{2}^{\alpha-1}\int_{\G}|x|^bu(x)\,dx+\|\phi_2\|_{L^{\alpha}(\G)}^{\alpha-1}\|u\|_{L^{\alpha}(\G)}\,.
\end{equation}
For $\tilde{u}$ as before, the right-hand side of \eqref{Qf} becomes 
\[
\lambda^{-b}C_{2}^{\alpha-1}\int_{\G}|x|^b u(x)\,dx+\lambda^{Q\left(1-\frac{1}{\alpha} \right)}\|\phi_2\|_{L^{\alpha}(\G)}^{\alpha-1}\|u\|_{L^{\alpha}(\G)}\,.
\]
Therefore, for any $\lambda>0$ the following inequality holds true
\begin{equation}
    \label{anyl1}
   C_{2}^{\alpha-1}\leq  \lambda^{-b}C_{2}^{\alpha-1}\int_{\G}|x|^b u(x)\,dx+\lambda^{Q\left(1-\frac{1}{\alpha} \right)}\|\phi_2\|_{L^{\alpha}(\G)}^{\alpha-1}\|u\|_{L^{\alpha}(\G)}:=N(\lambda)\,.
\end{equation}
Our next aim is, as we did above, to minimise the quantity $N(\lambda)$ over $\lambda$. Now, if we set $I=\int_{\G}|x|^bu(x)\,dx$, then we can write 
\[
N(\lambda)= \lambda^{-b}C_{2}^{\alpha-1}I+\lambda^{Q\left(1-\frac{1}{\alpha} \right)}\|\phi_2\|_{L^{\alpha}(\G)}^{\alpha-1}\|u\|_{L^{\alpha}(\G)}\,,
\]
and we have
\[
N'(\lambda)= (-b)\lambda^{-b-1}C_{2}^{\alpha-1}I+Q\left(1-\frac{1}{\alpha} \right)\lambda^{Q\left(1-\frac{1}{\alpha} \right)-1}\|\phi_2\|_{L^{\alpha}(\G)}^{\alpha-1}\|u\|_{L^{\alpha}(\G)}\,.
\]
It is then not difficult to check that $N'(\lambda^{**})=0$, for 
\[
\lambda^{**}= \left(\frac{b \alpha C_{2}^{\alpha-1}I}{\|\phi_2\|_{L^{\alpha}(\G)}^{\alpha-1}Q(\alpha-1)\|u\|_{L^{\alpha}(\G)}} \right)^{\frac{\alpha}{b \alpha+Q(\alpha-1)}}\,.
\]
A direct substitution then gives
\begin{equation}\label{Nla}
\begin{split}
    N(\lambda^{**}) & \ =  \left(\frac{b \alpha C_{2}^{\alpha-1}I}{\|\phi_2\|_{L^{\alpha}(\G)}^{\alpha-1}Q(\alpha-1)\|u\|_{L^{\alpha}(\G)}} \right)^{\frac{-b\alpha}{b \alpha+Q(\alpha-1)}}C_{2}^{\alpha-1}I  \\ &
     +  \left(\frac{b \alpha C_{2}^{\alpha-1}I}{\|\phi_2\|_{L^{\alpha}(\G)}^{\alpha-1}Q(\alpha-1)\|u\|_{L^{\alpha}(\G)}} \right)^{\frac{\alpha Q}{b \alpha+Q(\alpha-1)}\left(1-\frac{1}{\alpha} \right)}\|\phi_2\|_{L^{\alpha}(\G)}^{\alpha-1}\|u\|_{L^{\alpha}(\G)}\,.
   % & = & S I^{\frac{Q(\alpha-1)}{Q(\alpha-1)+\alpha b}} \|u\|_{L^{\alpha(\G)}}^{\frac{\alpha %b}{Q(\alpha-1)+\alpha b}},
\end{split}
\end{equation}
Observe that since
\[
-\left(\frac{-b\alpha}{b \alpha+Q(\alpha-1)} \right)=-\left[\frac{\alpha Q}{b \alpha+Q(\alpha-1)}\left(1-\frac{1}{\alpha} \right) \right]+1=\frac{\alpha b}{Q(\alpha-1)+\alpha b}\,,
\]
and 
\[
\left[\frac{-b\alpha}{b \alpha+Q(\alpha-1)} \right]+1=\frac{\alpha Q}{b \alpha+Q(\alpha-1)}\left( 1-\frac{1}{\alpha}\right)=\frac{Q(\alpha-1)}{Q(\alpha-1)+\alpha b}
\]
the quantities $I^{\frac{Q(\alpha-1)}{Q(\alpha-1)+\alpha b}}$ and $\|u\|_{L^{\alpha(\G)}}^{\frac{\alpha b}{Q(\alpha-1)+\alpha b}}$ are common multiplies of the summands in \eqref{Nla}, and we can write
\begin{equation}
    \label{Nlaa}
     N(\lambda^{**})=S I^{\frac{Q(\alpha-1)}{Q(\alpha-1)+\alpha b}} \|u\|_{L^{\alpha(\G)}}^{\frac{\alpha b}{Q(\alpha-1)+\alpha b}}
\end{equation}
where we have denoted by $S=S(\alpha,b,Q)$ the following quantity 
\begin{eqnarray}\label{S}
    S & = & \left(\frac{\alpha b}{Q\left(\alpha-1\right)}\right)^{\frac{-\alpha b}{Q(\alpha-1)+\alpha b}} C^{\frac{{Q(\alpha-1)^2}}{Q(\alpha-1)+\alpha b}}_{2}\,\|\phi_2\|_{L^{\alpha}(\G)}^{\frac{\alpha b(\alpha-1)}{Q(\alpha-1)+\alpha b}}\left(\frac{\alpha b+Q(\alpha-1)}{Q(\alpha-1)}\right)\nonumber\\
    & = & \left(\frac{\alpha b+Q(\alpha-1)}{\alpha b}\right)^{\frac{b+Q(\alpha-1)}{\alpha b+Q(\alpha -1)}}\left(\frac{\alpha b}{Q\left(\alpha-1\right)}\right)^\frac{Q(\alpha-1)}{\alpha b+Q(\alpha-1)}C^{\frac{{(Q+b)(\alpha-1)^2}}{Q(\alpha-1)+\alpha b}}_{2}\,,
\end{eqnarray}
and for the last equality we have used the expression \eqref{phia1} for $\|\phi_2\|^{\alpha}_{L^{\alpha}(\G)}$. 

Hence by \eqref{anyl1} and \eqref{Nlaa} we get 
\begin{align*}
  &1\le SC^{1-\alpha}_{2} \left(\int_{\mathbb{G}}|x|^bu(x) dx\right)^{\frac{Q(\alpha-1)}{Q(\alpha-1)+\alpha b}} \|u\|_{L^{\alpha}(\G)}^{\frac{\alpha b}{Q(\alpha-1)+\alpha b}}\,,
\end{align*}
where we have substituted the expression for $I$, and the latter implies that 
\begin{align*}
  &- \log\left(\|u\|_{L^{\alpha}(\G)}^{\frac{\alpha b}{Q(\alpha-1)+\alpha b}} \right)\le \log \left[SC^{1-\alpha}_{2} \left(\int_{\mathbb{G}}|x|^bu(x) dx\right)^{\frac{Q(\alpha-1)}{Q(\alpha-1)+\alpha b}}\right]\,.
\end{align*}
Hence we get  
\begin{align*}
  &-\frac{b}{Q(\alpha-1)+\alpha b}\log \int_{\G}u(x)^{\alpha}\,dx\le \ \frac{Q(\alpha-1)}{Q(\alpha-1)+\alpha b} \log\left([SC^{1-\alpha}_{2}]^{\frac{Q(\alpha-1)+\alpha b}{Q(\alpha-1)}} \int_{\mathbb{G}}|x|^bu(x)\,dx\right)\,,
\end{align*}
or after simplifications 
\begin{align}
    \frac{1}{1-\alpha} \log \int_\G u(x)^\alpha\, dx \leq \frac{Q}{b} \log \Bigg( [C^{1-\alpha}_{2} S]^{\frac{b\alpha}{Q(\alpha-1)}+1} \int_\G |x|^b u(x)  dx \Bigg),
\end{align} and we have obtained \eqref{eq1.1} for $\alpha>1$
with  $A_{\alpha,Q,b}$ given by 
\begin{equation}
\begin{split}
    A_{\alpha,Q,b} & \ = [C^{1-\alpha}_{2} S]^{\frac{b\alpha}{Q(\alpha-1)}+1}  \\&
 = \left[\left(\frac{\alpha b+Q(\alpha-1)}{\alpha b}\right)^{\frac{b+Q(\alpha-1)}{\alpha b+Q(\alpha -1)}}\left(\frac{\alpha b}{Q\left(\alpha-1\right)}\right)^\frac{Q(\alpha-1)}{\alpha b+Q(\alpha-1)} \right]^{\frac{b\alpha}{Q(\alpha-1)}+1} \\&
 \times C_{2}^{\left\{(1-\alpha)+\frac{(Q+b)(\alpha-1)^2}{Q(\alpha-1)+\alpha b}\right\} \left\{ \frac{b\alpha}{Q(\alpha-1)}+1\right\}}  \\ &
     =\frac{\alpha b}{Q(\alpha-1)}\left(\frac{\alpha b +Q(\alpha-1)}{\alpha b} \right)^{\frac{Q(\alpha-1)+b}{Q(\alpha-1)}}\left(\frac{b}{|\mathfrak{S}|}\frac{\Gamma\left(\frac{\alpha}{\alpha-1}+\frac{Q}{b}\right)}{\Gamma(\frac{\alpha}{\alpha-1})\Gamma(\frac{Q}{b})} \right)^{-\frac{b}{Q}}\,,
     \end{split}
\end{equation}
where we have used the expressions for $S$ and $C_2$ given in \eqref{S} and \eqref{C_2corr}, respectively. 
This completes the proof of the Theorem \ref{mainthm}.
 \end{proof}
 The proving procedure that we followed in Theorem \ref{mainthm} show that the inequality \eqref{eq1.1} is sharp. Indeed we have the following result:
 \begin{cor}\label{cor.sharp}
     The constant $A_{\alpha,Q,b}$ in the inequality \eqref{eq1.1} is sharp. In particular, when $\alpha \in (0,1)$, the inequality \eqref{eq1.1} holds true as as equality for (up to $L^1(\G)$-scaling) $u=\phi_1$ where $\phi_1(x)=C_1(1+|x|^b)^{\frac{1}{\alpha-1}}$ and $C_1$ is given by \eqref{C_1}. Similarly, when $\alpha>1$, the inequality \eqref{eq1.1} holds true as an equality for (up to $L^{1}(\G)$-scaling) $u=\phi_2$, where $\phi_2(x)=C_2 (1+|x|^b)^{\frac{1}{\alpha-1}}_{+}$, with $C_2$  given in \eqref{C_2corr}.
 \end{cor}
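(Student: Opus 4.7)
The strategy is to revisit each inequality used in the proof of Theorem \ref{mainthm} and to check that when $u$ is the proposed extremizer, every such step becomes an equality, so that the value $A_{\alpha,Q,b}$ is attained and hence cannot be improved.

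For $\alpha\in(0,1)$, the two Jensen inequalities in \eqref{1} and \eqref{2} are taken against the same probability measure $\|u\|_{L^\alpha(\G)}^{-\alpha}u^\alpha\,dx$, and the integrand is in each case a strictly convex or strictly concave function of the ratio $\phi_1(x)/u(x)$. Strict convexity of $-\log$ thus forces equality in both steps precisely when $\phi_1/u$ is a.e.\ constant, i.e.\ when $u=c\phi_1$ for some $c>0$; after normalizing to $\|u\|_{L^1(\G)}=1$ we take $c=1/\|\phi_1\|_{L^1(\G)}$. The dilation substitution $u\rightsquigarrow\tilde u=\lambda^Q u(D_\lambda\cdot)$ introduced in the proof then requires that the optimal $\lambda^\ast$ be $1$ for this choice of $u$; this amounts to the identity $\int_\G |x|^b u\,dx=Q(1-\alpha)/[\alpha b-Q(1-\alpha)]$, which in turn follows from a direct Beta-function computation using the Appendix formula \eqref{C_1} together with the gamma-integral expressions for $\int_\G\phi_1\,dx$ and $\int_\G|x|^b\phi_1\,dx$. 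With $\lambda^\ast=1$ and all Jensen steps tight, the entire chain from \eqref{1} through \eqref{beforelog} collapses to equalities, yielding equality in \eqref{eq1.1}.

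For $\alpha>1$, the same philosophy applies to the two Jensen steps leading to \eqref{1a} and \eqref{2a}: strict concavity of $G(t)=-t\log t$ and of $\log t$ forces $\phi_2/u$ to be a.e.\ constant on $\operatorname{supp}(u)$, so $u=c\phi_2$. The one additional inequality $(1-|x|^b)_+\geq 1-|x|^b$ used in \eqref{Q2} is tight on $\{|x|\leq 1\}\supseteq\operatorname{supp}(\phi_2)$, so it is likewise saturated. An analogous Beta-function identification of the optimizing $\lambda^{\ast\ast}$ in \eqref{anyl1} with the value that realizes $\tilde u=c\phi_2$ closes the argument.

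The main obstacle --- really only a bookkeeping issue --- is verifying that, for $u$ proportional to the conjectured extremizer, the explicit optimizing value of $\lambda^\ast$ (resp.\ $\lambda^{\ast\ast}$) coming from the proof matches the value for which $\tilde u$ becomes proportional to $\phi_1$ (resp.\ $\phi_2$). This comes down to one explicit gamma-integral identity in each case, guaranteed a posteriori by the invariance of both sides of \eqref{eq1.1} under the scaling $u\mapsto\lambda^Q u(D_\lambda\cdot)$.
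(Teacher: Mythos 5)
Your proposal is correct and follows essentially the same route as the paper, which states the corollary as a direct consequence of the proof of Theorem \ref{mainthm}: one checks that for $u=\phi_1$ (resp.\ $u=\phi_2$) every Jensen step is saturated, the cut-off inequality $(1-|x|^b)_+\geq 1-|x|^b$ is an equality on the support of $\phi_2$, and the optimal dilation parameter equals $1$, the last point being exactly the Appendix identities \eqref{B2} and its analogue for $\phi_2$. The only caveat is typographical: the extremizer for $\alpha>1$ is $\phi_2(x)=C_2\left(1-|x|^b\right)_+^{\frac{1}{\alpha-1}}$ as in \eqref{dfn.phi2}, not $\left(1+|x|^b\right)_+^{\frac{1}{\alpha-1}}$ as misprinted in the statement.
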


%\begin{rem}\label{rem.alpa1}
%Corollary \ref{cor.sharp} yields an alternative expression for the constant $A_{\alpha,Q,b}$ for the case where $\alpha \in (0,1)$. Indeed, since inequality \eqref{beforelog} is equivalent to \eqref{eq1.1} as in Theorem \ref{mainthm}, we have 
%\[
%\|\phi_1\|_{L^{\alpha}(\G)}^{\frac{\alpha}{1-\alpha}}=A_{\alpha,Q,b}\left\{ \int_{\G}|x|^b\phi_1(x)\,dx\right\}^{\frac{Q}{b}}\,.
%\]
%Hence by substituting $\|\phi_1\|_{L^{\alpha}(\G)}$ as in \eqref{phia}, and the integral $\int_{\G}|x|^b\phi_1(x)\,dx$ as in \eqref{phia1} we get the following expression for $A_{\alpha,Q,b}$,
%\[
%A_{\alpha,Q,b}=\left(\frac{\alpha b}{Q(1-\alpha)}\right)\left(\frac{\alpha b}{\alpha b-Q(1-\alpha)}\right)^{\frac{b}{Q(1-\alpha)}-1}C^{\frac{-b}{Q}}_{1}\,,
%\]
%where $C_1$ is given in \eqref{C_1}.
%\end{rem}
\begin{rem}\label{rem.notsharp}
    Using simple mathematical arguments, it is easy to check that the left-hand side of the inequality \eqref{eq1.1}, i.e., of the R\'enyi entropy, when $\alpha \rightarrow 1$ approximates the Shannon entropy, i.e., the quantity 
    \[
    -\int_{\G}u(x) \log u(x)\,dx\,.
    \]
    In \cite{CKR} the authors proved the optimal Shannon inequality on homogeneous Lie groups that reads as follows
    \begin{equation}\label{Shann.CG}
     -\int_{\G}u(x) \log u(x)\,dx \leq \frac{Q}{2} \log \left(C_\G \int_\G |x|^2 u(x)\,dx \right)\,,
    \end{equation}
    where the best constant $C_\G$ is given by 
    \begin{equation}\label{CG}
    C_\G=\frac{2e}{Q}\left( \frac{|\mathfrak{S}|\Gamma\left(\frac{Q}{2} \right)}{2}\right)^{\frac{2}{Q}}\,,
    \end{equation}
     and we have $C_{\R}=\frac{2e\pi}{n}$, as expected, since the constant $C_{\G}$ is optimal.
  \end{rem}

 In the next result, by taking the limit of the R\'enyi entropy, we also obtain the so-called Shannon inequality on homogeneous Lie groups. As expected, the sharpness of the constant in the inequality \eqref{eq1.1}  implies an optimal bound for the Shannon entropy in the aforesaid setting which is exactly the quantity on the right-hand side of \eqref{Shann.CG}.
 \begin{thm}\label{thm.Shann}
     Let $\G$ be a homogeneous Lie group of homogeneous dimension $Q$, and suppose that $|\cdot|$ is a homogeneous quasi-norm on $\G$. Then Shannon inequality reads as follows
     \begin{equation}
         \label{Shan}
          -\int_{\G}u(x) \log u(x)\,dx \leq  \frac{Q}{2} \log \left(C_\G \int_\G |x|^2 u(x)\,dx \right)\,,
     \end{equation}
     where 
     \[
   C_{\G}= \frac{2e}{Q}\left( \frac{|\mathfrak{S}|\Gamma\left(\frac{Q}{2} \right)}{2}\right)^{\frac{2}{Q}}\,.
     \]
 \end{thm}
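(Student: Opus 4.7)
The plan is to derive \eqref{Shan} by specializing Theorem~\ref{mainthm} to $b=2$ and taking the limit $\alpha\to 1$, along the lines indicated in Remark~\ref{rem.notsharp} that the R\'enyi entropy converges pointwise to the Shannon entropy in this regime.

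For the convergence of the left-hand side of \eqref{eq1.1}, I would use a Taylor expansion. For a sufficiently nice density $u$ (say smooth, compactly supported, with $\int u|\log u|\,dx<\infty$), writing $u^\alpha=u\,e^{(\alpha-1)\log u}$ and expanding in $(\alpha-1)$ gives
\[
\int_\G u(x)^\alpha\,dx=1+(\alpha-1)\int_\G u(x)\log u(x)\,dx+O\bigl((\alpha-1)^2\bigr),
\]
so that $\frac{1}{1-\alpha}\log\int_\G u^\alpha\,dx\to -\int_\G u\log u\,dx$ as $\alpha\to 1$. A standard density argument then promotes this to the full class of admissible densities on $\G$.

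The substantive step is to show that $A_{\alpha,Q,2}^{2/Q}\to C_\G$ as $\alpha\to 1$. I would approach the limit from $\alpha\uparrow 1$ (the case $\alpha\downarrow 1$ is analogous and must give the same limit), setting $\varepsilon=1-\alpha$ and $s=1/\varepsilon\to\infty$. The three factors defining $A_{\alpha,Q,2}$ in the first branch of Theorem~\ref{mainthm} can then be analyzed individually: the Gamma-ratio factor $\bigl(\tfrac{2}{|\mathfrak S|}\tfrac{\Gamma(s)}{\Gamma(s-Q/2)\Gamma(Q/2)}\bigr)^{-1}$ is controlled by the Stirling asymptotic $\Gamma(s)/\Gamma(s-Q/2)\sim s^{Q/2}$, giving a contribution of order $\varepsilon^{Q/2}\,\tfrac{|\mathfrak S|\Gamma(Q/2)}{2}$; the factor $\bigl(\tfrac{2\alpha}{2\alpha-Q\varepsilon}\bigr)^{1/\varepsilon}$ is of the indeterminate type $1^\infty$ and tends to $e^{Q/2}$ via $\log(1-y)\sim -y$; and the last factor $\bigl(\tfrac{2\alpha-Q\varepsilon}{Q\varepsilon}\bigr)^{Q/2}$ blows up like $(2/(Q\varepsilon))^{Q/2}$. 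The two opposite powers of $\varepsilon$ cancel, and the product tends to
\[
\lim_{\alpha\to 1^-}A_{\alpha,Q,2}=\frac{|\mathfrak S|\Gamma(Q/2)}{2}\left(\frac{2e}{Q}\right)^{Q/2}=C_\G^{Q/2},
\]
so that $A_{\alpha,Q,2}^{2/Q}\to C_\G$ exactly as given by \eqref{CG}. Passing to the limit in \eqref{eq1.1} with $b=2$ then yields \eqref{Shan}, and by Corollary~\ref{cor.sharp} the constant $C_\G$ inherits the optimality of $A_{\alpha,Q,b}$.

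The main obstacle is the delicate bookkeeping in the asymptotics of $A_{\alpha,Q,2}$: two of its factors blow up or vanish like $\varepsilon^{\pm Q/2}$, while a third contributes an $e^{Q/2}$ coming from a genuine $1^\infty$ indeterminacy, and these three pieces must be balanced precisely. The Stirling expansion for the ratio of Gamma functions is what supplies the correct rate of decay, ensuring that the product is finite and equals $C_\G^{Q/2}$.
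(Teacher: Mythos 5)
Your proposal is correct and follows essentially the same route as the paper: specialize Theorem~\ref{mainthm} to $b=2$, let $\alpha\to 1$, and extract the limit of the constant via Stirling's formula for the Gamma-ratio together with the $1^\infty$ limit giving the factor $e^{Q/2}$. The only difference is that you take $\alpha\uparrow 1$ through the branch $0<\alpha<1$ (obtaining $A_{\alpha,Q,2}\to C_\G^{Q/2}$, hence $A_{\alpha,Q,2}^{2/Q}\to C_\G$ consistently with the exponent $b/Q$ in \eqref{eq1.1}), whereas the paper takes $\alpha\downarrow 1$ through the branch $\alpha>1$; both computations yield the same constant $C_\G$ inside the logarithm.
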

 \begin{proof}
     We first consider the case where $\alpha>1$. By Remark \ref{rem.notsharp} it is enough to show that $\lim_{\alpha \rightarrow 1}A_{\alpha,Q,2}=C_{\G}$, where $A_{\alpha,Q,2}$ is given by Theorem \ref{mainthm} for $\beta=2$, and $\alpha>1$.  If we set $c=\frac{\alpha}{\alpha-1}$, then $c \rightarrow \infty$, when $\alpha \rightarrow 1$, and we have 
    \[
    A_{c(\alpha),Q,2}=\frac{2c}{Q}\left(1+\frac{Q}{2c} \right)^{\frac{2c}{\alpha Q}+1} \left(\frac{2}{|\mathfrak{S}|}\frac{\Gamma\left( c+\frac{Q}{2}\right) }{\Gamma\left(c \right)\Gamma\left(\frac{Q}{2} \right)}\, \right)^{-\frac{2}{Q}}\,.
    \]
    To calculate the quantity $\left(\frac{\Gamma\left(c \right)}{\Gamma\left(c+\frac{Q}{2} \right)}\, \right)^{\frac{2}{Q}}$, when $c \gg 1$, we will use the Stirling approximation formula:
    \[
    \Gamma(x)\simeq \sqrt{2 \pi} e^{-x}x^{x-1/2}\,,\quad \text{when} \quad x \gg 1\,.
    \]
    We have 
    \begin{eqnarray*}
     \left(\frac{\Gamma\left(c \right)}{\Gamma\left(c+\frac{Q}{2} \right)}\, \right)^{\frac{2}{Q}} & \simeq & \left[ \frac{\sqrt{2 \pi}e^{-c}c^{c-1/2}}{\sqrt{2 \pi} e^{-c-Q/2}(c+Q/2)^{c+Q/2-1/2}}\right]^{\frac{2}{Q}}\\
    & = & e c^{\frac{2c}{Q}-\frac{1}{Q}} (c+Q/2)^{-1-\frac{2c}{Q}+\frac{1}{Q}}\,.
    \end{eqnarray*}
    Since
    \[
    \lim_{c \rightarrow \infty, \alpha \rightarrow 1}\left(1+\frac{Q}{2c} \right)^{\frac{2c}{\alpha Q}+1} =1\,,
    \]
    we have 
    \begin{eqnarray*}
        \lim_{c \rightarrow \infty,\alpha \rightarrow 1}  A_{c(\alpha),Q,2} & = & \frac{2e}{Q}\left( \frac{2}{|\mathfrak{S}|}\Gamma\left(\frac{Q}{2} \right)^{-1}\right)^{-\frac{2}{Q}}  \lim_{c \rightarrow \infty} c c^{\frac{2c}{Q}-\frac{1}{Q}} (c+Q/2)^{-1-\frac{2c}{Q}+\frac{1}{Q}}\\
        & = & \frac{2e}{Q}\left( \frac{2}{|\mathfrak{S}|}\Gamma\left(\frac{Q}{2} \right)^{-1}\right)^{-\frac{2}{Q}}\\
        & = & \frac{2e}{Q}\left( \frac{|\mathfrak{S}|}{2}\Gamma\left(\frac{Q}{2} \right)\right)^{\frac{2}{Q}} \,,
    \end{eqnarray*}
    completing the proof.
 \end{proof}

    \begin{comment}
    Therefore one might expect that the optimal Shannon inequality in the setting of homogeneous Lie groups can be proved by taking the limit of \eqref{eq1.1} as $\alpha \rightarrow 1$. However, this is not the case, since we have 
    \begin{equation}
    \lim_{\alpha \rightarrow 1}A_{\alpha,Q,2}=
    \begin{cases}
    0\,, \quad \rm if \quad \alpha>1\\
    0\,, \quad \rm if \quad \alpha<1\,.
    \end{cases}
    \end{equation}
    Indeed, 
   
    Regarding the Shannon inequality in the general setting of homogeneous Lie groups, this is proved in  \cite{CKR}  using completely different methods from ours, and reads as follows
    \begin{equation}
        \label{Shan}
        -\int_{\G}u(x) \log u(x)\,dx \leq \frac{Q}{2} \log \left(C_{\G} \int_{\G}|x|^2 u(x)\,dx \right)\,,
    \end{equation}
 where the   appearing constant $C_{\G}$ is given by
     \[
     C_{\G}= 4\left( \frac{|\mathfrak{S}|\Gamma\left(\frac{Q}{2} \right)^{2}}{2\Gamma(Q)}\right)^{\frac{2}{Q}}\,.
     \]
     Recall, see e.g. \cite{Suguro}, that the Shannon inequality with best constant in the Euclidean case reads as follows 
     \[
      -\int_{\mathbb{R}^n}u(x) \log u(x)\,dx \leq  \frac{n}{2} \log \left(\frac{2\pi e}{n} \int_{\mathbb{R}^n}|x|^2 u(x)\,dx \right)\,,
     \]
     which in particular means that the constant obtained in \cite{CKR} is not sharp since clearly $C_{\mathbb{R}^n} \neq \frac{2 \pi e }{n}$.  \end{comment}

 The next result applies to a subclass of graded Lie groups when $V_1$ as in \eqref{strat} generates the whole of the Lie algebra. Precisely, if $\mathbb{G}$ is a graded Lie group with Lie algebra $\mathfrak{g}$ that admits a gradation of the form \eqref{strat} and where the elements $\{X_1,\cdots,X_k\}$, that are such that $V_1=\text{span}\{X_1,\cdots,X_k\}$, generate after iterated commutators the whole of $\mathfrak{g}$. 
 
 In the case of stratified Lie groups the elliptic Laplace operator $\Delta$ on $\mathbb{R}^n$ is replaced by the (positive) hypoelliptic operator $\Delta_{\text{sub}}$, that is commonly called a sub-Laplacian on $\mathbb{G}$, defined via 
 \[
 \Delta_{\text{sub}}=-\sum_{i=1}^{k}X_{i}^{2}\,,
 \]
 and we have $\Delta_{\text{sub}}=-\nabla_{H}^{*}\nabla_{H}$, where $\nabla_{H}$ is called the horizontal gradient on $\mathbb{G}$; namely $\nabla_{H}$ is the vector valued operator acting on $f$ via 
 \[
 \nabla_{H}f=(X_1 f,\cdots, X_k f)\,.
 \]
 Let us recall a version of the logarithmic Sobolev inequality in the setting of stratified groups, as appeared in \cite[Corollary 7.1]{CKR1}
 \begin{prop}
     Let $\G$ be a stratified Lie group with homogeneous dimension $Q$. Then, the following log-Sobolev inequality is satisfied
\begin{equation}
    \label{cor.log.sob}
    \int_{\mathbb{G}}|f|^2 \log|f|\,dx \leq \frac{Q}{4}\log \left(A \int_{\mathbb{G}}|\nabla_{H}f|^2\,dx \right)\,,
\end{equation}
for every $f$ such that $\|f\|_{L^2(\G)}=1$, for some constant $A$ with a formula that depends on the group $\G$. 
 \end{prop}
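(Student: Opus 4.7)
My plan is to derive the stated logarithmic Sobolev inequality by combining the Folland--Stein Sobolev embedding on stratified Lie groups with a Jensen interpolation step that converts a power-norm bound into an entropy bound. The only non-trivial analytic input is the Sobolev-type inequality on a stratified group $\G$ of homogeneous dimension $Q \geq 3$,
\[
\|f\|_{L^{2^{*}}(\G)} \leq C_{S}\,\|\nabla_{H} f\|_{L^{2}(\G)}, \qquad 2^{*} = \frac{2Q}{Q-2},
\]
with $C_{S}=C_{S}(\G)$ depending only on the group, which is available from \cite{FS} and its subsequent refinements. The rest of the argument is soft.

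Assuming $\|f\|_{L^{2}(\G)}=1$, I apply Jensen's inequality to the concave function $\log$ and the probability measure $|f|^{2}\,dx$ to obtain, for any $q > 2$,
\[
(q-2)\int_{\G}|f|^{2}\log|f|\,dx \;=\; \int_{\G}|f|^{2}\log|f|^{q-2}\,dx \;\leq\; \log\!\int_{\G}|f|^{q-2}\,|f|^{2}\,dx \;=\; \log\|f\|_{L^{q}(\G)}^{q},
\]
so that $\int_{\G}|f|^{2}\log|f|\,dx \leq \tfrac{q}{q-2}\log\|f\|_{L^{q}(\G)}$; this step is elementary and uses nothing about the group structure. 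Specializing $q = 2^{*}$ gives $q/(q-2) = Q/2$, and inserting the Folland--Stein estimate produces
\[
\int_{\G}|f|^{2}\log|f|\,dx \leq \frac{Q}{2}\log(C_{S}\|\nabla_{H}f\|_{L^{2}(\G)}) = \frac{Q}{4}\log(C_{S}^{2}\|\nabla_{H}f\|_{L^{2}(\G)}^{2}),
\]
which is exactly \eqref{cor.log.sob} with the explicit choice $A = C_{S}^{2}$. A routine density argument passes the bound from smooth compactly supported $f$ to general admissible $f$ with $\|f\|_{L^{2}(\G)}=1$.

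The main obstacle I anticipate is the low-dimensional regime $Q \leq 2$, where $2^{*}$ is no longer a useful exponent and the argument above collapses; in that case one would replace the Sobolev inequality by a Gagliardo--Nirenberg family $\|f\|_{L^{p}(\G)} \leq C(p)\|\nabla_{H} f\|_{L^{2}(\G)}^{\theta(p)}\|f\|_{L^{2}(\G)}^{1-\theta(p)}$ and let $p\to 2^{+}$ inside the Jensen step, or alternatively invoke the subelliptic heat semigroup generated by $\Delta_{\mathrm{sub}}$ together with Gross's equivalence between hypercontractivity and the log-Sobolev inequality. A secondary issue is that the constant $A = C_{S}^{2}$ produced this way is almost certainly not optimal; obtaining the sharp $A$ would require working with the best Sobolev constant on $\G$, which is known explicitly only in special cases such as the Heisenberg group via the Jerison--Lee theorem.
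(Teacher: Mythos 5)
Your argument is correct, and it is worth noting at the outset that the paper itself does not prove this Proposition: it is quoted verbatim from \cite[Corollary 7.1]{CKR1}, so there is no internal proof to compare against beyond the citation. Your derivation --- Jensen's inequality for $\log$ against the probability measure $|f|^2\,dx$ applied to $|f|^{q-2}$, followed by the Folland--Stein embedding $\|f\|_{L^{2^*}(\G)}\leq C_S\|\nabla_H f\|_{L^2(\G)}$ at $q=2^*=\tfrac{2Q}{Q-2}$, where indeed $\tfrac{q}{q-2}=\tfrac{Q}{2}$ --- is the standard route and is in the same spirit as the cited reference, which obtains the inequality from the (fractional) Gagliardo--Nirenberg family; your version uses only the critical endpoint of that family. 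Two small remarks. First, the restriction $Q\geq 3$ is harmless: every non-abelian stratified group has $Q\geq 4$, and the only excluded cases are $\mathbb{R}$ and $\mathbb{R}^2$, where the classical Euclidean log-Sobolev inequality is available, so your fallback via Gagliardo--Nirenberg or Gross's hypercontractivity equivalence is not really needed except in those two cases. Second, as you anticipate, $A=C_S^2$ is not the optimal constant (in the Euclidean case the sharp Sobolev and sharp log-Sobolev inequalities have different extremizers, so the Jensen step necessarily loses sharpness); this does not affect the Proposition, which only asks for \emph{some} $A$, but it does mean that feeding your $A$ into Corollary \ref{cor.unc} would yield a weaker uncertainty constant than the one the paper records for the Heisenberg group and for $\mathbb{R}^n$, where the optimal $A$ is taken from \cite{JL88} and \cite{Tal} respectively.
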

 Let us point out that the constant $A$ is related to the Gagliardo-Nirenberg inequalities in our setting, and cannot be yet explicitly computed in the general setting of a stratified Lie group. However, in the particular case of the Heisenberg group we have $A=\frac{(n!)^{\frac{1}{n+1}}}{\pi n^2}$, see \cite{JL88}, \cite{CKR23}, and when $\G=\mathbb{R}^n$ we have $A=(\pi n^2-2\pi n)^{-1/2}\frac{\Gamma(n)}{\Gamma(n/2)}$, see \cite{Tal}, and in these cases $A$ is optimal. To avoid technicalities we do not include  the explicit formula for $A$ here, and we refer the interested reader to \cite{CKR1,GKR, RTY}. 
 \begin{thm}\label{thm.rev.LS}
      Let $\G$ be a stratified Lie group with homogeneous dimension $Q$, and let $u \in L^1(\G) \cap L^2(\G)$ be a nonnegative function such that $\|u\|_{L^1(\G)}=1.$ Then it holds that 
      \begin{eqnarray}\label{log.sov.11}
      \int_{\G}u(x) \log u(x)\,dx \leq  \frac{Q}{2} \log \left(\frac{A}{4}\int_{\G}\frac{1}{u(x)}|\nabla_{H}u|^2\,dx \right)\,,   
      \end{eqnarray}
      where the constant $A$ is the one appearing in \eqref{cor.log.sob}.
 \end{thm}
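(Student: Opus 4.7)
The natural route is to apply the logarithmic Sobolev inequality \eqref{cor.log.sob} to the test function $f=u^{1/2}$, which mirrors exactly the way Stam's inequality \eqref{Stam} is obtained in the Euclidean setting. First, the normalisation is automatic: since $u\geq 0$ and $\|u\|_{L^1(\G)}=1$, we have
\[
\|f\|_{L^2(\G)}^{2}=\int_{\G}u(x)\,dx=1,
\]
so $f$ is an admissible function in \eqref{cor.log.sob}. Next, the LHS of \eqref{cor.log.sob} simplifies because $|f|^2\log|f|=\tfrac{1}{2}\,u\log u$, giving
\[
\int_{\G}|f|^{2}\log|f|\,dx=\frac{1}{2}\int_{\G}u(x)\log u(x)\,dx.
\]

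For the RHS, the chain rule yields $\nabla_{H} u^{1/2}=\tfrac{1}{2}u^{-1/2}\nabla_{H}u$ pointwise where $u>0$, and hence
\[
|\nabla_{H}f|^{2}=\frac{1}{4\,u(x)}|\nabla_{H}u(x)|^{2}.
\]
Substituting into \eqref{cor.log.sob} and multiplying by $2$ gives precisely
\[
\int_{\G}u\log u\,dx\leq \frac{Q}{2}\log\!\left(\frac{A}{4}\int_{\G}\frac{1}{u(x)}|\nabla_{H}u|^{2}\,dx\right),
\]
which is the claimed inequality \eqref{log.sov.11}. If the right-hand integral is infinite, the estimate is trivial, so we may assume it is finite.

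The only genuinely delicate point is justifying the chain rule for $u^{1/2}$ under the hypotheses $u\in L^{1}(\G)\cap L^{2}(\G)$, because $u$ may vanish on a set of positive measure and is not a priori smooth. The standard remedy is a regularisation: replace $u$ by $u_{\varepsilon}:=u+\varepsilon\,\chi$ (with $\chi$ a fixed nonnegative cut-off) or, more directly, work with $v_\varepsilon:=\sqrt{u+\varepsilon\,\eta}$ where $\eta$ is a fixed probability density such as $\eta=\phi_1$ from Corollary \ref{cor.sharp}. On such approximants $v_\varepsilon$ is a bona fide $H^{1}$ function to which \eqref{cor.log.sob} applies, the chain rule is valid, and one then passes to the limit $\varepsilon\to 0^{+}$ using monotone/dominated convergence on the left-hand side together with Fatou's lemma on the gradient term (noting $|\nabla_{H}v_{\varepsilon}|^{2}\to \tfrac{1}{4u}|\nabla_{H}u|^{2}$ a.e.\ on $\{u>0\}$ and the integrand is controlled). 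After renormalising back to $\|u_\varepsilon\|_{L^1}=1$, the scaling invariance of both sides ensures the constants are not affected in the limit, delivering the inequality in the stated generality.
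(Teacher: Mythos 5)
Your proof is correct and follows exactly the paper's route: the paper likewise obtains \eqref{log.sov.11} by applying \eqref{cor.log.sob} to $f=u^{1/2}$, and your substitution, the factor $\tfrac14$ from $|\nabla_H u^{1/2}|^2=\tfrac{1}{4u}|\nabla_H u|^2$, and the final multiplication by $2$ all check out. The regularisation remarks at the end are a sensible addition that the paper omits.
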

 \begin{proof}
    Let $u$ be a function as in the hypothesis. The proof follows by inequality \eqref{cor.log.sob} for $u$  such that $f=u^{\frac{1}{2}}$.
 \end{proof}
 As it happens in the Euclidean setting, see \cite{KOS}, a combination of the logarithmic Sobolev inequality of the form \eqref{log.sov.11} with the Shannon's inequality \eqref{Shan} yields the Heisenberg uncertainty principle in the setting of stratified groups.
 \begin{cor}\label{cor.unc}
     A version of the  Heisenberg uncertainty principle in the setting of stratified groups reads as follows:
       \begin{equation}
          \label{UP1}
           \left( \int_{\G}|x|^2 u(x)\,dx \right) \left(\int_{\G}\frac{1}{u(x)}|\nabla_{H}u|^2\,dx \right)\geq 4C_{\G}^{-1}A^{-1}\,,
      \end{equation}
     for all functions $u$ as in the hypothesis of Theorem \ref{thm.rev.LS}, where  $C_\G$ and $A$ are given  in \eqref{CG}, \eqref{cor.log.sob}, respectively. 
 \end{cor}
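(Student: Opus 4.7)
The plan is to chain together the two one-sided bounds on the Shannon entropy $h[u]=-\int_\G u\log u\,dx$ that are available: the upper bound by the second moment given in Theorem \ref{thm.Shann}, and the lower bound by the horizontal Fisher-type quantity $\int_\G u^{-1}|\nabla_H u|^2\,dx$ coming from Theorem \ref{thm.rev.LS}. The key point is that the former bounds $h[u]$ from above and the latter bounds $h[u]$ from below, so squeezing $h[u]$ between them kills the entropy and leaves an inequality relating only the two integrals appearing in the statement.

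More concretely, first I would rewrite Theorem \ref{thm.rev.LS} in the equivalent form
\begin{equation*}
-h[u]=\int_\G u(x)\log u(x)\,dx\;\le\;\frac{Q}{2}\log\!\left(\frac{A}{4}\int_\G \frac{1}{u(x)}|\nabla_H u(x)|^2\,dx\right),
\end{equation*}
so that $h[u]\ge -\tfrac{Q}{2}\log\bigl(\tfrac{A}{4}\int_\G u^{-1}|\nabla_H u|^2\,dx\bigr)$. Shannon's inequality \eqref{Shan} gives $h[u]\le \tfrac{Q}{2}\log\bigl(C_\G\int_\G |x|^2 u\,dx\bigr)$. Combining these two bounds on $h[u]$ yields
\begin{equation*}
-\frac{Q}{2}\log\!\left(\frac{A}{4}\int_\G \frac{1}{u(x)}|\nabla_H u(x)|^2\,dx\right)\;\le\;\frac{Q}{2}\log\!\left(C_\G\int_\G |x|^2 u(x)\,dx\right).
\end{equation*}

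Dividing by $Q/2$, moving the logarithm on the left to the right-hand side, and exponentiating (using monotonicity of $\log$) then produces
\begin{equation*}
1\;\le\;\frac{A\,C_\G}{4}\left(\int_\G |x|^2 u(x)\,dx\right)\!\left(\int_\G \frac{1}{u(x)}|\nabla_H u(x)|^2\,dx\right),
\end{equation*}
which is exactly \eqref{UP1} after rearrangement. The hypotheses on $u$ match: Theorem \ref{thm.rev.LS} needs $u\in L^1(\G)\cap L^2(\G)$ nonnegative with $\|u\|_{L^1(\G)}=1$, and Theorem \ref{thm.Shann} only requires $u$ to be a probability density with finite second moment, both of which are assumed.

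There is no serious obstacle: the argument is a two-line algebraic manipulation once the two ingredients are in place. The only minor point to mention is that the right-hand sides of both inequalities are finite (otherwise \eqref{UP1} is trivial), and that the combination preserves the best constants of each ingredient, so the resulting constant $4C_\G^{-1}A^{-1}$ is as sharp as the constants $C_\G$ and $A$ themselves are in their respective inequalities.
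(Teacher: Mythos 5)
Your proposal is correct and follows exactly the paper's own argument: combine the lower bound on the Shannon entropy from Theorem \ref{thm.rev.LS} with the upper bound from Shannon's inequality \eqref{Shann.CG}, then divide by $Q/2$ and exponentiate. No further comment is needed.
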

 \begin{proof}
     A combination of \eqref{log.sov.11} and \eqref{Shann.CG}  gives 
     \[
     -\frac{Q}{2} \log \left(\frac{A}{4} \int_{\G}\frac{1}{u(x)}|\nabla_{H}u|^2\,dx \right) \leq \frac{Q}{2} \log \left(C_{\G} \int_{\G}|x|^2 u(x)\,dx \right)\,.
     \]
     Therefore, we have
     \[
    -\log (C_{\G}\frac{A}{4}) \leq  \log \left[ \left(\int_{\G}\frac{1}{u(x)}|\nabla_{H}u|^2\,dx \right)\left( \int_{\G}|x|^2 u(x)\,dx\right) \right]\,,
     \]
     and  by the properties of the logarithm, we obtain 
      \[
    \left( \int_{\G}|x|^2 u(x)\,dx \right) \left(\int_{\G}\frac{1}{u(x)}|\nabla_{H}u|^2\,dx \right)\geq 4A^{-1}C_{\G}^{-1}\,,
     \]
    and the proof is complete.     
 \end{proof}

  \section{Appendix} For the proof of Theorem \ref{mainthm} we used the auxiliary functions $\phi_1,\phi_2\in L^1(\mathbb{G})\cap L^1_b(\mathbb{G})$, given by 
  $$\phi_1(x)=C_{1}\left(1+|x|^b\right)^{\frac{1}{\alpha-1}},\quad\text{where}\quad b>Q\left(\frac{1}{\alpha}-1\right), \,\alpha \in (0,1)\,,$$
  \begin{equation}\label{dfn.phi2}
  \phi_2(x)=C_{2}\left(1-|x|^b\right)_+^{\frac{1}{\alpha-1}},\quad\text{where}\quad b>0,\,\alpha>1\,,
  \end{equation}
  where the constants $C_1=C_1(\alpha,b,Q)$ and $C_2=C_2(\alpha,b,Q)$ that we are computing below are such that $\|\phi_i\|_{L^1(\G)}=1$, for $i=1,2$. The computations that follow, is a homogeneous group adaptation of the analogous computation in the Euclidean setting developed in \cite{Suguro}.
  \smallskip

  Using the polar decomposition for the function $\phi_1$ we can write 
 \begin{equation*}
 \begin{split}
\int_{\mathbb{G}}\left(1+|x|^b\right)^{\frac{1}{\alpha-1}}dx &= \int_0^\infty\int_\mathfrak{S}\left(1+r^b\right)^{\frac{1}{\alpha-1}}r^{Q-1}dr\,d\sigma(y)\\&
=|\mathfrak{S}|\int_0^\infty \left(1+r^b\right)^{\frac{1}{\alpha-1}}r^{Q-1}dr\,.
\end{split}
\end{equation*}
If we set $1+r^b=t$, so that  $r=(t-1)^\frac{1}{b}$, then we can further compute 
\begin{equation}\label{beforebeta}
\begin{split}
   \int_{\mathbb{G}}\left(1+|x|^b\right)^{\frac{1}{\alpha-1}}dx &=  |\mathfrak{S}|\int_1^\infty t^{\frac{1}{\alpha-1}}\frac{(t-1)^\frac{(Q-1)}{b}}{b(t-1)^{1-\frac{1}{b}}}dt \\&
  = \frac{|\mathfrak{S}|}{b}\int_1^\infty t^{\frac{1}{\alpha-1}}(t-1)^{\frac{Q}{b}-1}dt\,.
  \end{split}
\end{equation}
To further compute \eqref{beforebeta}, let us first recall the definition of the Beta function \begin{equation}\label{dfn.beta}
    B(x,y)=\int_{0}^{1}t^{x-1}(1-t)^{y-1}dt\,,
\end{equation}
  where $x,y \in \mathbb{C}$, with $\text{Re}(x), \text{Re}(y)>0$. Alternatively, one can obtain the following representation of $B$
  \begin{equation}\label{beta.f}
  B(x,y)=\int_1^\infty w^{-(x+y)}(w-1)^{y-1}dw\,.
  \end{equation}
   For $-(x+y)=\frac{1}{\alpha-1}$ and $y=\frac{Q}{b}$, we have
     $x=\frac{1}{1-\alpha}-\frac{Q}{b}$, so that with the use of \eqref{beta.f} and the property (see e.g. \cite{Suguro}) which relates the Beta function with the Gamma function
     \[
     B(x,y)=\frac{\Gamma(x)\Gamma(y)}{\Gamma(x+y)}\,,\quad x,y \notin \mathbb{Z}_{0}^{-}\,,
     \]
     due to \eqref{beforebeta} and \eqref{beta.f} we have 
     \begin{align} \label{value of C}
      \nonumber&\int_{\mathbb{G}}\left(1+|x|^b\right)^{\frac{1}{\alpha-1}}dx =\frac{|\mathfrak{S}|}{b}\,B\left(\frac{1}{1-\alpha}-\frac{Q}{b},\frac{Q}{b}\right)\\&\quad\quad\quad\quad\quad\quad\quad=\frac{|\mathfrak{S}|}{b}\frac{\Gamma\left(\frac{1}{1-\alpha}-\frac{Q}{b}\right)\Gamma(\frac{Q}{b})}{\Gamma\left(\frac{1}{1-\alpha}\right)}.  
     \end{align} Now, the requirement  $\|\phi_1\|_{L^1(\G)}=1$, yields  
\begin{align}\label{C_1}
C_{1}=  \frac{b}{|\mathfrak{S}|}\frac{\Gamma\left(\frac{1}{1-\alpha}\right)}{\Gamma\left(\frac{1}{1-\alpha}-\frac{Q}{b}\right)\Gamma(\frac{Q}{b})}\,.
\end{align}

  Next we compute the $L^{\alpha}(\G)$-norm of $\phi_1$.  To this end, let us first compute the following integral 
  \begin{eqnarray*}
      \int_{\mathbb{G}}|x|^b \phi_1(x)\,dx & = & C_{1} |\mathfrak{S}|\int_0^\infty r^b(1+r^b)^{\frac{1}{\alpha-1}} r^{Q-1}\,dr \nonumber \\
      & = & C_{1} |\mathfrak{S}|\int_0^\infty (1+r^b)^\frac{1}{\alpha-1} r^{b+Q-1}\,dr \\
      & = & C_{1} \frac{|\mathfrak{S}}{b}|\int_1^\infty  t^{\frac{1}{\alpha-1}} (t-1)^{\frac{Q+b}{b}-1}\,dt\,,
  \end{eqnarray*}
where we have used the change of variables $(1+r^b)=t$ implying that $r=(t-1)^\frac{1}{b}$.
    Now, using \eqref{beta.f} the latter can be expressed in the following form 
    \begin{eqnarray}\label{phia1}
         \int_{\mathbb{G}}|x|^b \phi_1(x)\,dx & = & C_{1} \frac{|\mathfrak{S}|}{b}\,B\left(\frac{1}{1-\alpha}-\frac{Q}{b}-1,\frac{Q}{b}+1\right) \nonumber \\
         & = & C_{1} \frac{|\mathfrak{S}|}{b}\,\frac{\Gamma(\frac{1}{1-\alpha}-\frac{Q}{b}-1)\Gamma\left(\frac{Q}{b}+1\right)}{\Gamma(\frac{1}{1-\alpha})}\,.
    \end{eqnarray}
       Recall the following property of the Gamma function
     \begin{align*}
         z\Gamma(z)=\Gamma(z+1)\,, \quad \text{for all} \quad z\in \mathbb{C}, z \notin -\mathbb{N}\,.
     \end{align*} 
    We then have
     \begin{align}\label{eq11}
        \Gamma\left(\frac{1}{1-\alpha}-\frac{Q}{b}-1\right) =  \frac{\Gamma\left(\frac{1}{1-\alpha}-\frac{Q}{b}\right)}{\left(\frac{1}{1-\alpha}-\frac{Q}{b}-1\right)},   
     \end{align}
     and 
     \begin{align}\label{eq12}
         \Gamma \left(\frac{Q}{b}+1\right)=\frac{Q}{b}\Gamma\left(\frac{Q}{b}\right)\,.
     \end{align}
     Using \eqref{eq11} and \eqref{eq12} in \eqref{phia1}, we obtain
     \begin{eqnarray}\label{B2}
         \int_{\G} |x|^b \phi_1(x)\,dx & = & C_{1} \frac{|\mathfrak{S}|}{b}\,\frac{\frac{\Gamma\left(\frac{1}{1-\alpha}-\frac{Q}{b}\right)}{\left(\frac{1}{1-\alpha}-\frac{Q}{b}-1\right)}\left(\frac{Q}{b}\right)\Gamma\left(\frac{Q}{b}\right)}{\Gamma(\frac{1}{1-\alpha})}\nonumber\\
         & = & \frac{Q(1-\alpha)}{\alpha b-Q(1-\alpha)}\,,
     \end{eqnarray}
     where for the last equality we have used the formula for $C_1$ given in \eqref{C_1}.
     
    Finally using \eqref{B2} and the fact that $\|\phi_1\|_{L^1(\G)}=1$, we get 
    \begin{eqnarray}\label{phia}
        \|\phi_1\|^{\alpha}_{L^{\alpha}(\G)} & = & \int_{\G}C_{1}^{\alpha}(1+|x|^b)^{\frac{\alpha}{\alpha-1}}\,dx \nonumber\\
        & = & \int_{\G}C_{1}^{\alpha-1}C_{1}(1+|x|^b)(1+|x|^b)^{\frac{1}{\alpha-1}}\,dx \nonumber \\
        & = & C_{1}^{\alpha-1} \int_{\G} (1+|x|^b)\phi_{1}(x)\,dx \nonumber\\
        & = & C_{1}^{\alpha-1} \left\{\int_{\G}\phi_1(x)\,dx + \int_{\G}|x|^b \phi_1(x) \right\}\nonumber\\
        & = & C_{1}^{\alpha-1} \left\{1+ \frac{Q(1-\alpha)}{\alpha b -Q(1-\alpha)} \right\}\nonumber\\
        & = & C_{1}^{\alpha-1} \frac{\alpha b }{\alpha b- Q(1-\alpha)}\,.
    \end{eqnarray}
Let us now perform a similar analysis for the function $\phi_2$ as in \eqref{dfn.phi2} where $b>0$ and $ \alpha>1$. As before, we use polar decomposition and we have 
\begin{eqnarray*}
    \int_{\mathbb{G}}(1-|x|^b)_{+}^{\frac{1}{\alpha-1}}\,dx & = & \int_{\mathfrak{S}}\int_{0}^{1}\left(1-r^b\right)^{\frac{1}{\alpha-1}} r^{Q-1}dr\,d\sigma(y) \\
    & = & |\mathfrak{S}| \int_{0}^{1}\left(1-r^b\right)^{\frac{1}{\alpha-1}} r^{Q-1}dr \,.
\end{eqnarray*}

 For $(1-r^b)=t$ we get 
 \begin{eqnarray}\label{alpha>1}
       \int_{\mathbb{G}}(1-|x|^b)_{+}^{\frac{1}{\alpha-1}}\,dx & = & |\mathfrak{S}|\int_0^1 t^{\frac{1}{\alpha-1}}\left(1-t\right)^{\frac{Q-1}{b}}\frac{1}{b}\left(1-t\right)^{\frac{1}{b}-1} dt \nonumber \\
       & = & \frac{|\mathfrak{S}|}{b}\int_0^1 t^{\frac{1}{\alpha-1}}\left(1-t\right)^{\frac{Q}{b}-1}\,dt\,.
 \end{eqnarray}
Arguing as before and using the definition of the  Beta function as in \eqref{dfn.beta} for $x=\frac{\alpha}{\alpha-1}$ and $y=\frac{Q}{b}$, the condition  $\|\phi_2\|_1=1$ yields 
\[
\int_{\mathbb{G}}\phi_2(x)\,dx= \frac{C_{2}}{b}\,|\mathfrak{S}|\frac{\Gamma(\frac{\alpha}{\alpha-1})\Gamma(\frac{Q}{b})}{\Gamma\left(\frac{\alpha}{\alpha-1}+\frac{Q}{b}\right)}=1\,,
\]
and consequently, we get 
\begin{equation}\label{C_2corr}
    C_2= \frac{b}{|\mathfrak{S}|}\frac{\Gamma\left(\frac{\alpha}{\alpha-1}+\frac{Q}{b}\right)}{\Gamma(\frac{\alpha}{\alpha-1})\Gamma(\frac{Q}{b})}\,.
\end{equation}
Following the same arguments as before one can compute the $L^{\alpha}(\G)$-norm of $\phi_2$ and get 
\begin{equation}
    \label{phib}
    \|\phi_2\|_{L^{\alpha}(\G)}^{\alpha}=C_{2}^{\alpha-1}\frac{\alpha b }{b \alpha+Q(\alpha-1)}\,.
\end{equation}

\section{Acknowledgement}
The authors are supported by the FWO Odysseus 1 grant G.0H94.18N: Analysis and Partial Differential Equations and by the Methusalem programme of the Ghent University Special Research Fund (BOF) (Grant number 01M01021).  Michael Ruzhansky is also supported by EPSRC grants EP/R003025/2 and EP/V005529. M. Chatzakou is a postdoctoral fellow of the Research Foundation – Flanders (FWO) under the postdoctoral grant No 12B1223N.

\end{document}